\newtheorem{thm}{Theorem}[section]
\newtheorem{cor}[thm]{Corollary}
\newtheorem{prop}[thm]{Proposition}
\newtheorem{assum}{Assumption}
\theoremstyle{definition}
\newtheorem{definition}[thm]{Definition}
\theoremstyle{remark}
\newtheorem{rem}[thm]{Remark}
\numberwithin{equation}{section}
\title{Estimation of sub-Gaussian random vectors using the method of moments.}
\author{Taras Bodnar, Dmitry Otryakhin$^1$ and Erik Thorsen}
\date{%
	Department of Mathematics, Stockholm University\\%
	\vspace*{2pt}
	$^1$ d.otryakhin.acad@protonmail.ch\\%
	\vspace*{3pt}
	\today
}
\begin{document}
\maketitle

\begin{abstract}
	The sub-Gaussian stable distribution is a heavy-tailed elliptically contoured law which has interesting applications in signal processing and financial mathematics. This work addresses the problem of feasible estimation of this type of distributions. We present a method based on application of the method of moments to the empirical characteristic function. Further, we show almost sure convergence of our estimators, discover their limiting distribution and demonstrate their finite-sample performance.
\end{abstract}

\section{Introduction}
The sub-Gaussian stable distribution can be defined by its characteristic function.

Let $\mathbf{x}\sim S_p(\boldsymbol\mu, \boldsymbol\Sigma, \boldsymbol{\alpha})$ where $S_p()$ denotes the $p$ dimensional sub-Gaussian stable distribution. We have that the characteristic function (Prop. 2.5.2, \cite{Samorod}) is
\begin{equation}\label{eqn:charac}
	\phi(\mathbf{t}) = \mathbb{E}[e^{i\mathbf{t}^\top \mathbf{x}}] = e^{i \mathbf{t} ^\top \boldsymbol\mu - \left(\frac{1}{2} \mathbf{t} ^\top \boldsymbol\Sigma \mathbf{t} \right)^{\alpha/2}}
\end{equation}
where $0<\alpha<2$, $\mathbf{t} \in \mathbb{R}^p$, $\boldsymbol\mu \in  \mathbb{R}^p$, and $\boldsymbol\Sigma$ is a $p \times p$ symmetric positive semi-definite matrix. By default, column-type vectors are used throughout the article.

$S_p$ is a generalization of the one-dimensional symmetric stable distribution. As seen from (\ref{eqn:charac}), compared to the general multivariate stable distribution, the sub-Gaussian stable distribution is simpler: its characteristic function does not require one to work with a complicated spectral measure. It possesses interesting properties: the law belongs to the class of elliptically contoured stable distributions and has heavy tails. These properties make it a useful tool for applications in finance \cite{nolan99}.

The univariate stable distribution was found to be useful in video foreground detection. \cite{ming03} describes a method for detection of moving objects on images produced by static cameras. It used Cauchy distribution to model ratios of pixel intensities. Later, \cite{bhaskar2010} presented a similar model, but instead of Cauchy distributed random variables they used mixtures of univariate stable ones. The distributions of ratios of pixel intensities are observed to be symmetric. Moreover, it is reasonable to consider joint distributions of intensities of different colours. Thus, the sub-Gaussian stable distribution  naturally arises in this field.

The first ideas of applying the method of moments to the empirical characteristic function for estimation of distributions defined by (\ref{eqn:charac}) originate in \cite{S_J_Press_77}. They were later enhanced in \cite{kring2009estimation}, though neither of the articles provided a complete feasible estimation strategy. In this work, we use some of the ideas presented in those two works in order to create three efficient estimators: one for parameter $\alpha$, one for diagonal elements of $\boldsymbol\Sigma$, and one for non-diagonal elements of $\boldsymbol\Sigma$. In the past decade there have been created two estimation methods of other kinds for the model (\ref{eqn:charac}): in \cite{omelch14} the author describes a maximum likelyhood method while \cite{EM_SGest18} presents an EM algorithm.

\section{Estimation based on the empirical characteristic function}

Assume that we have a sample of independent identically distributed random variables $\mathbf{x}_j,\; j=1,2,3,...,n$ whose entries are distributed according to the sub-Gaussian stable distribution. For the given sample at hand, the empirical characteristic function is defined as
\begin{equation}
\label{eq:phi_empiric}
    \hat\phi_n(\mathbf{t}):= \frac{1}{n}\sum_{j=1}^n e^{i\mathbf{t}^\top \mathbf{x}_j}
\end{equation}
Through the result of the law of large numbers we have that for any given $\mathbf{t} \in \mathbb{R}^p$ \begin{equation}\label{eqn:empir_charac}
	\hat\phi_n(\mathbf{t}) \stackrel{a.s.}{\rightarrow} \phi(\mathbf{t}),
\end{equation}
where $|z|$ denotes the absolute value of the complex variable $z$: $|z|=\sqrt{\operatorname{Re}(z)^2 + \operatorname{Im}(z)^2}$.

\begin{assum}
	\label{ass:e_Sig_e_n_0}
	$\mathbf{e}^T \boldsymbol\Sigma \mathbf{e} \neq 0$.
\end{assum}

We first write an estimator of $\alpha$ developed by S. J. Press (formula (4.4) in \cite{S_J_Press_77}). Under Assumption \ref{ass:e_Sig_e_n_0}
\begin{equation}\label{eqn:alpha_hat}
    \hat \alpha^p = \frac{\operatorname{log}\left[ \frac{\operatorname{log}|\hat \phi_n(s_1\mathbf{e})|}{\operatorname{log}|\hat \phi_n(s_2\mathbf{e})|} \right]}{\operatorname{log}|s_1/s_2|} = \frac{1}{\operatorname{log}|s_1/s_2|} \log \left[\frac{
    \log \left(\operatorname{Re}^2[\hat \phi_n(s_1\mathbf{e})] + \operatorname{Im}^2[\hat \phi_n(s_1\mathbf{e})] \right)}{\log \left(\operatorname{Re}^2[\hat \phi_n(s_2\mathbf{e})] + \operatorname{Im}^2[\hat \phi_n(s_2\mathbf{e})]\right)} \right],
\end{equation}
where $\mathbf{e}=(1,1,\dots,1)$, and $s_1 \neq s_2$ are scalars.
It is a simple estimator, but it works well only if all components of the vectors have comparable narrow distributions. There are reasons why we don't rely on the  estimator of $\alpha$ (\ref{eqn:alpha_hat}). First, the estimator exploits a  cumulative statistic $\mathbf{t}^T \mathbf{x}$ which has larger variance than individual summands $\mathbf{t}_j^T\mathbf{x}_j$. Moreover, one component may dominate all the others reducing the data efficiently used for estimation. Second, any estimate $\hat \alpha$ converging to the true value inside the interval $(0,2)$ has  variance decreasing with $n$, while the variance of $\mathbf{t}^T \mathbf{x}$ increases with $n$. Thus, it seems to be a good idea to estimate $\alpha$ from the components of $\mathbf{t}$ separately and then to aggregate them. We introduce an auxiliary estimator $\hat \alpha_s(k)$ based on only 1 component of the sub-Gaussian vector
\begin{equation}
\label{def:alpha_s}
\hat \alpha_s(k) := \frac{\operatorname{log}\left[ \frac{\operatorname{log}|\hat \phi_n(s_1\mathbf{e}_k)|}{\operatorname{log}|\hat \phi_n(s_2\mathbf{e}_k)|} \right]}{\operatorname{log}|s_1/s_2|}
\end{equation}
where $e_k$ has 1 only in the k-s component and every other one equals zero. Then the aggregation is done via simple averaging:
\begin{equation}
\label{def:alpha_m}
\hat \alpha^{mult}:=\frac{1}{p}\sum_{k=1}^p \hat \alpha_s(k)
\end{equation}
\begin{rem}
	Pre-estimation of the diagonal elements of $\boldsymbol\Sigma$ with subsequent rescaling should help equalize influence of different components of $\mathbf{x}$ on the estimate so that unweighted average (\ref{def:alpha_m}) is not affected by highly-dispersed components.
\end{rem}
\begin{assum}
\label{ass:Sigma_d_n_0}
    $\boldsymbol\Sigma_{kk} \neq 0$ for all $k=1, \dots, p$. That is, no component of $\mathbf{x}$ in (\ref{eqn:charac}) has a degenerate distribution.
\end{assum}
\begin{prop}[Strong consistency of the alpha estimators]
	\label{prop:alpha_consistency}	
	Suppose that Assumption \ref{ass:Sigma_d_n_0} holds. Set $s_1>0$ and $s_2>0$ such that $s_1 \neq s_2$. Then for $\hat \alpha_s$ defined in (\ref{def:alpha_s}) and $\hat \alpha^{mult}$ defined in (\ref{def:alpha_m}) it holds that 
	\begin{equation}
		\label{eqn:alpha_consistency}
        \hat \alpha_s(k) \stackrel{a.s.}{\rightarrow} \alpha \qquad \hat \alpha^{mult} \stackrel{a.s.}{\rightarrow} \alpha
	\end{equation}
\end{prop}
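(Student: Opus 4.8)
The plan is to reduce the statement to the continuous mapping theorem applied to the almost sure convergence $\hat\phi_n(\mathbf{t}) \stackrel{a.s.}{\rightarrow} \phi(\mathbf{t})$ recorded in (\ref{eqn:empir_charac}), evaluated at the two points $\mathbf{t} = s_1\mathbf{e}_k$ and $\mathbf{t} = s_2\mathbf{e}_k$. The key preliminary computation is to evaluate the population characteristic function at these points. From (\ref{eqn:charac}) one gets $|\phi(s\mathbf{e}_k)| = \exp\{-(\tfrac{1}{2}s^2\Sigma_{kk})^{\alpha/2}\}$, hence $\log|\phi(s\mathbf{e}_k)| = -(\tfrac{1}{2}\Sigma_{kk})^{\alpha/2}|s|^\alpha$. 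Forming the ratio, the common factor $(\tfrac{1}{2}\Sigma_{kk})^{\alpha/2}$ cancels and $\log|\phi(s_1\mathbf{e}_k)|/\log|\phi(s_2\mathbf{e}_k)| = |s_1/s_2|^\alpha$; taking the outer logarithm and dividing by $\log|s_1/s_2|$ returns exactly $\alpha$. This verifies that the map whose plug-in estimate is $\hat\alpha_s(k)$ has the correct population value, and it isolates where Assumption \ref{ass:Sigma_d_n_0} is needed.

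First I would record that $\boldsymbol\Sigma$ is positive semidefinite, so $\Sigma_{kk} \ge 0$; together with Assumption \ref{ass:Sigma_d_n_0} this forces $\Sigma_{kk} > 0$. Consequently, for $s_i > 0$ the quantity $(\tfrac{1}{2}s_i^2\Sigma_{kk})^{\alpha/2}$ is a strictly positive finite number, so $|\phi(s_i\mathbf{e}_k)| \in (0,1)$ and $\log|\phi(s_i\mathbf{e}_k)|$ is a finite strictly negative constant for $i=1,2$. This is precisely the condition that keeps the limit point away from every singularity of the functions involved: the inner logarithm is applied to a positive argument, the denominator of the ratio is nonzero, and the outer logarithm is applied to the positive number $|s_1/s_2|^\alpha$. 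Finally, $s_1 \neq s_2$ with both positive gives $\log|s_1/s_2| \neq 0$, so the final division is by a nonzero constant.

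With the limit point pinned strictly inside the domain of continuity, the first claim follows from a chain of continuous-mapping steps applied to (\ref{eqn:empir_charac}): the modulus is continuous, so $|\hat\phi_n(s_i\mathbf{e}_k)| \stackrel{a.s.}{\rightarrow} |\phi(s_i\mathbf{e}_k)|$; the logarithm is continuous at the positive limit, so $\log|\hat\phi_n(s_i\mathbf{e}_k)| \stackrel{a.s.}{\rightarrow} \log|\phi(s_i\mathbf{e}_k)|$; division is continuous since the limiting denominator is nonzero; and the outer logarithm together with division by the constant $\log|s_1/s_2|$ are continuous at the resulting positive limit. Composing these yields $\hat\alpha_s(k) \stackrel{a.s.}{\rightarrow} \alpha$. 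Almost sure convergence also guarantees that for almost every realisation the estimator is eventually well defined, since the random arguments are eventually confined to a neighbourhood of the limit on which every map is continuous.

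For $\hat\alpha^{mult}$ the claim follows because an average of finitely many almost surely convergent sequences converges almost surely to the average of the limits: intersecting the $p$ probability-one events on which $\hat\alpha_s(k) \rightarrow \alpha$ gives a probability-one event on which $\hat\alpha^{mult} = \tfrac{1}{p}\sum_{k=1}^p \hat\alpha_s(k) \rightarrow \tfrac{1}{p}\sum_{k=1}^p \alpha = \alpha$. I expect the only genuinely delicate point to be the bookkeeping at the singularities of $\log$ and of division; once Assumption \ref{ass:Sigma_d_n_0} and the choice $s_1,s_2>0$ with $s_1\neq s_2$ are used to place the population limit strictly inside the common domain of continuity, the remainder is routine.
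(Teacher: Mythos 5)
Your proposal is correct and follows essentially the same route as the paper: the almost sure convergence of the empirical characteristic function is pushed through a chain of continuous-mapping steps (modulus, logarithm, ratio, outer logarithm, division by $\log|s_1/s_2|$), with Assumption \ref{ass:Sigma_d_n_0} keeping the limit away from the singularities, and $\hat\alpha^{mult}$ handled by averaging the $p$ almost surely convergent sequences. If anything, your justification of the division step is cleaner than the paper's, which appeals to $\log|\hat\phi_n(s_2\mathbf{e}_k)|=0$ being a zero-probability event, whereas the relevant point for the continuous mapping theorem --- which you state correctly --- is that the limiting denominator $-\left(\tfrac{1}{2}s_2^2\boldsymbol\Sigma_{kk}\right)^{\alpha/2}$ is nonzero under Assumption \ref{ass:Sigma_d_n_0}.
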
	
\begin{proof}
    From (\ref{eqn:empir_charac})  we have 
    \begin{equation}
        |\hat \phi_n(s_j\mathbf{e}_k) | \stackrel{a.s.}{\rightarrow} |\phi(s_j\mathbf{e}_k) | = \exp \left[-\left(\frac{1}{2} s_j \mathbf{e}_k^T \boldsymbol\Sigma s_j \mathbf{e}_k\right)^{\alpha/2} \right] = \exp \left[-\left( \frac{1}{2} s_j^2 \boldsymbol\Sigma_{kk} \right)^{\alpha/2} \right], \quad j=1,2
    \end{equation}
    Continuous mapping theorem yields
    \begin{equation}
    \label{eq:SCAE_log_phi_conv}
        \log|\hat \phi_n(s_j\mathbf{e}_k) | \stackrel{a.s.}{\rightarrow} -\left( \frac{1}{2} s_j^2 \boldsymbol\Sigma_{kk} \right)^{\alpha/2}, \quad j=1,2 
    \end{equation}  
    Function $f(\delta_1, \delta_2)=\frac{\delta_1}{\delta_2}$ is discontinuous at only one point--- $\delta_2=0$. Hence, it is possible to apply Continuous mapping theorem with $f$ to (\ref{eq:SCAE_log_phi_conv}) since  
    \begin{align*}
    		\delta_2=0 \iff \log|\hat \phi_n(s_2\mathbf{e}_k)| = 0 \quad \text{is a zero-probability event. Then, due to Assumption \ref{ass:Sigma_d_n_0}}
    \end{align*}
    
    \begin{equation}
        \qquad \frac{\log|\hat \phi_n(s_1\mathbf{e}_k)|}{\log|\hat \phi_n(s_2\mathbf{e}_k) |} \stackrel{a.s.}{\rightarrow}  \left(\frac{s_1}{s_2}\right)^{\alpha}
    \end{equation}
    \begin{equation}
        \log \frac{\log|\hat \phi_n(s_1\mathbf{e}_k)|}{\log|\hat \phi_n(s_2\mathbf{e}_k) |} \stackrel{a.s.}{\rightarrow}  \alpha \cdot \log\left(\frac{s_1}{s_2}\right)
    \end{equation}
    Thus, the first identity in (\ref{eqn:alpha_consistency}) is proven; the second one follows immediately.
\end{proof}	
\begin{prop}[Strongly consistent matrix estimators]
	\label{prop:estimators}	
	Consider the distribution determined by (\ref{eqn:charac}) and let $\hat \alpha$ be a strongly consistent estimator of parameter $\alpha$ so that $\hat \alpha \stackrel{a.s.}{\rightarrow} \alpha$. Then
\begin{align}\label{eqn:matrix_estims}
\begin{split}
    [\hat{\boldsymbol{\Sigma}}]_{ii} &:= \frac{2}{{s^2_i}}(-\operatorname{log}|\hat \phi_n(s_i\mathbf{e}_i)|)^{2/\hat \alpha} \stackrel{a.s.}{\rightarrow} \boldsymbol\Sigma_{ii}, \quad i \leq p \\
    [\hat{\boldsymbol{\Sigma}}]_{ij} &:= \frac{\big[-\operatorname{log}(|\hat \phi_n(\mathbf{e}_i+\mathbf{e}_j)| )\big]^{2/\hat \alpha} - \big[-\operatorname{log}( |\hat \phi_n(\mathbf{e}_i-\mathbf{e}_j)| )\big]^{2/\hat \alpha}}{2}\stackrel{a.s.}{\rightarrow} \boldsymbol\Sigma_{ij}, \quad i \leq p, \quad j \leq i
\end{split}
\end{align}
\end{prop}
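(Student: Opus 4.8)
The plan is to obtain each limit in (\ref{eqn:matrix_estims}) as a deterministic composition of continuous operations applied to the almost surely convergent objects $\hat\phi_n(\mathbf{t})$ (from (\ref{eqn:empir_charac})) and $\hat\alpha$ (given by hypothesis). Since only finitely many evaluation points $\mathbf{t}\in\{s_i\mathbf{e}_i,\,\mathbf{e}_i\pm\mathbf{e}_j\}$ enter the estimators, the relevant a.s.\ convergences hold simultaneously on a single full-measure event, on which I may reason exactly as for deterministic sequences. The backbone is a repeated application of the continuous mapping theorem along the chain $\hat\phi_n(\mathbf{t})\to\phi(\mathbf{t})$, then modulus, then $-\log(\cdot)$, then the power $(\cdot)^{2/\hat\alpha}$, closing with elementary algebra.

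First I would record the deterministic targets. For the diagonal estimator, evaluating (\ref{eqn:charac}) at $\mathbf{t}=s_i\mathbf{e}_i$ gives $|\phi(s_i\mathbf{e}_i)|=\exp[-(\tfrac12 s_i^2\boldsymbol\Sigma_{ii})^{\alpha/2}]$, whence $-\log|\phi(s_i\mathbf{e}_i)|=(\tfrac12 s_i^2\boldsymbol\Sigma_{ii})^{\alpha/2}$, strictly positive under Assumption \ref{ass:Sigma_d_n_0}. Raising to the power $2/\alpha$ and multiplying by $2/s_i^2$ returns exactly $\boldsymbol\Sigma_{ii}$, so the estimator is the empirical analogue of an identity. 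The modulus is continuous, and $\log$ is continuous at the strictly positive limit, so the first three links of the chain transfer the a.s.\ convergence without difficulty.

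For the off-diagonal estimator I would exploit the polarization structure: at $\mathbf{t}=\mathbf{e}_i\pm\mathbf{e}_j$ the quadratic form is $(\mathbf{e}_i\pm\mathbf{e}_j)^\top\boldsymbol\Sigma(\mathbf{e}_i\pm\mathbf{e}_j)=\boldsymbol\Sigma_{ii}+\boldsymbol\Sigma_{jj}\pm 2\boldsymbol\Sigma_{ij}$, hence $[-\log|\phi(\mathbf{e}_i\pm\mathbf{e}_j)|]^{2/\alpha}=\tfrac12(\boldsymbol\Sigma_{ii}+\boldsymbol\Sigma_{jj}\pm 2\boldsymbol\Sigma_{ij})$. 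Subtracting the two expressions and dividing by $2$ cancels $\boldsymbol\Sigma_{ii}+\boldsymbol\Sigma_{jj}$ and leaves $\boldsymbol\Sigma_{ij}$, which is the claimed deterministic identity.

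The step I expect to be the main obstacle is the last link, where the exponent $2/\hat\alpha$ is itself random. Here I would treat $(x,a)\mapsto x^{2/a}$ as a function of the jointly convergent pair $(-\log|\hat\phi_n(\mathbf{t})|,\hat\alpha)\stackrel{a.s.}{\rightarrow}(-\log|\phi(\mathbf{t})|,\alpha)$ and invoke the continuous mapping theorem, which requires continuity of this map at the limit point. In the diagonal case the limit has $x>0$, where continuity is immediate. The delicate situation is the off-diagonal case when $(\mathbf{e}_i\pm\mathbf{e}_j)^\top\boldsymbol\Sigma(\mathbf{e}_i\pm\mathbf{e}_j)=0$, producing the boundary value $x=0$; but since $|\hat\phi_n(\mathbf{t})|=|\tfrac1n\sum_j e^{i\mathbf{t}^\top\mathbf{x}_j}|\le 1$ forces $-\log|\hat\phi_n(\mathbf{t})|\ge 0$, and since $x^{2/a}=\exp((2/a)\log x)\to 0$ as $x\to 0^+$ with $a\to\alpha\in(0,2)$, the map extends continuously via $0^{2/a}:=0$ and the argument still closes. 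Collecting the links on the common full-measure event then yields both displays in (\ref{eqn:matrix_estims}).
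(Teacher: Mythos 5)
Your proposal is correct and takes essentially the same route as the paper's proof: establish $\log|\hat\phi_n(\mathbf{t})| \stackrel{a.s.}{\rightarrow} -\left(\tfrac12\mathbf{t}^\top\boldsymbol\Sigma\mathbf{t}\right)^{\alpha/2}$, apply the continuous mapping theorem jointly with $\hat\alpha \stackrel{a.s.}{\rightarrow} \alpha$ to get $\big[-\log|\hat\phi_n(\mathbf{t})|\big]^{2/\hat\alpha} \stackrel{a.s.}{\rightarrow} \tfrac12\mathbf{t}^\top\boldsymbol\Sigma\mathbf{t}$, and then specialize to $\mathbf{t}=s_i\mathbf{e}_i$ and the polarization identity at $\mathbf{t}=\mathbf{e}_i\pm\mathbf{e}_j$. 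If anything, you are more careful than the paper: it invokes the multivariate continuous mapping theorem without comment, whereas you explicitly handle the degenerate boundary case $(\mathbf{e}_i\pm\mathbf{e}_j)^\top\boldsymbol\Sigma(\mathbf{e}_i\pm\mathbf{e}_j)=0$ (allowed, since the proposition imposes no nondegeneracy on $\boldsymbol\Sigma$) by extending $(x,a)\mapsto x^{2/a}$ continuously with $0^{2/a}:=0$.
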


\begin{proof}
(\ref{eqn:charac}) and (\ref{eqn:empir_charac}) yield the following convergence:
\begin{equation}
\label{eq:log_abs}
    \log|\hat \phi_n(\mathbf{t}) | \stackrel{a.s.}{\rightarrow} - \left(\frac{1}{2}\mathbf{t}^\top \boldsymbol\Sigma \mathbf{t}\right)^{\frac{\alpha}{2}}.
\end{equation}
Using (\ref{eq:log_abs}), condition $\hat \alpha \stackrel{a.s.}{\rightarrow} \alpha$ and the multivariate continuous mapping theorem, the following statement is obtained:
\begin{equation}
\label{eq:log_abs_alpha_hat}
\big[-\log|\hat \phi_n(\mathbf{t})|\big]^{2/{\hat \alpha}} \stackrel{a.s.}{\rightarrow}  \frac{1}{2}\mathbf{t}^\top \boldsymbol\Sigma \mathbf{t}.
\end{equation}
We consider three types of vector $\mathbf{t}$ used in \cite{kring2009estimation}, (e.g. Corollary 1): $\mathbf{t}=s_i \mathbf{e}_i$, $\mathbf{t}= \mathbf{e}_i+\mathbf{e}_j$ and $\mathbf{t}=\mathbf{e}_i-\mathbf{e}_j$. 

a$)$ Since $(s_i\mathbf{e}_i)^T \boldsymbol\Sigma (s_i\mathbf{e}_i)=s_i^2\boldsymbol\Sigma_{ii}$, for all $i=1,...,p$ we may construct an estimator for the diagonal elements of $\boldsymbol\Sigma$ by substituting $\mathbf{t}$ in (\ref{eq:log_abs_alpha_hat}) by $s_i\mathbf{e}_i$.

b$)$ Given the scheme in Figure \ref{vector-matrix-vector_multiplication}
	\begin{figure}[h!]
		\begin{centering}
	    \includegraphics[keepaspectratio, height=5.3cm]{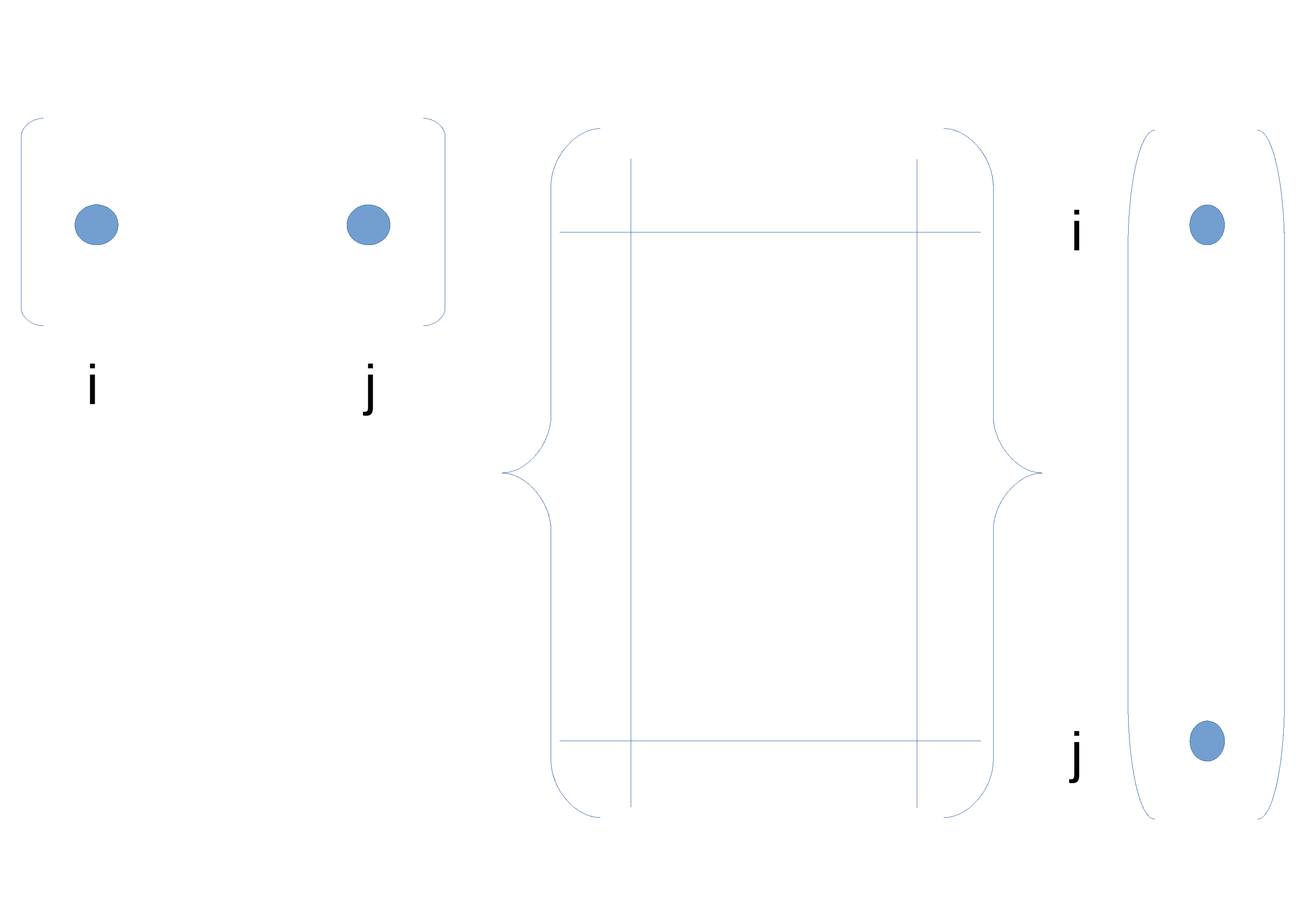}
	    \caption{A scheme of the matrix multiplication.}
	    \label{vector-matrix-vector_multiplication}
        \end{centering}
    \end{figure}
it is clear that 
\begin{align}
\begin{split}
	(\mathbf{e}_i-\mathbf{e}_j)^T\boldsymbol\Sigma(\mathbf{e}_i-\mathbf{e}_j) &= (\boldsymbol\Sigma_{ii} - \boldsymbol\Sigma_{ij}) - (\boldsymbol\Sigma_{ji} - \boldsymbol\Sigma_{jj}) = \boldsymbol\Sigma_{ii} - 2\boldsymbol\Sigma_{ij} + \boldsymbol\Sigma_{jj}
	\\
	(\mathbf{e}_i+\mathbf{e}_j)^T\boldsymbol\Sigma(\mathbf{e}_i+\mathbf{e}_j) &= \boldsymbol\Sigma_{ii} + 2\boldsymbol\Sigma_{ij} + \boldsymbol\Sigma_{jj} \quad \text{analogously.}
\end{split}
\end{align}
The last two statements yield that 
\begin{equation}
	\label{eq:Sigma_non-diagonal}
	(\mathbf{e}_i+\mathbf{e}_j)^T\boldsymbol\Sigma(\mathbf{e}_i+\mathbf{e}_j) - (\mathbf{e}_i-\mathbf{e}_j)^T\boldsymbol\Sigma(\mathbf{e}_i-\mathbf{e}_j) = 4\boldsymbol\Sigma_{ij}.
\end{equation}
At last, using (\ref{eq:log_abs_alpha_hat}) and (\ref{eq:Sigma_non-diagonal}) we obtain the second formula in (\ref{eqn:matrix_estims})
\end{proof}
%%%%%%%%%%%%%%%%%%%%%%%%%%%%%%%%%%%%%%%%%%%%%%%%%%%%%%%%%%%%%%%

Note, that in this proposition, $\hat \alpha$ is any plug-in estimator, although in this work, we confine ourselves to using (\ref{eqn:alpha_hat}) and (\ref{def:alpha_m}). Also, in Proposition \ref{prop:estimators} no assumptions are imposed on $\boldsymbol\Sigma$ except that it should be a legitimate covariance matrix. 

Let $\hat{\boldsymbol{\Sigma}}_d$ denote the vector of the diagonal elements of matrix $\hat{\boldsymbol{\Sigma}}$, $\hat {\boldsymbol{\Sigma}}_d[i]: = \hat{\boldsymbol{\Sigma}}[i,i]$ and let us construct $\hat {\boldsymbol{\Sigma}}_{nd}$--- the vector of the non-diagonal elements of $\hat{\boldsymbol{\Sigma}}$. In short, to do so, we take the elements under the diagonal row-by-row and stack them together, see Table \ref{tbl:indexing_nd}. 

\setcounter{table}{0}
\begin{table}[H]
	\begin{centering}
	\begin{tabular}{| c || c || c | c || c | c | c || c | c | c | c || c}
		\hline
		i  & 2 & 3 & 3 & 4 & 4 & 4 & 5 & 5 & 5 & 5 & $\dots$\\ \hline
		j  & 1 & 1 & 2 & 1 & 2 & 3 & 1 & 2 & 3 & 4 & $\dots$\\ \hline
		\# & 1 & 2 & 3 & 4 & 5 & 6 & 7 & 8 & 9 & 10 & $\dots$\\
		\hline
	\end{tabular}
    \caption{Packing the elements under the diagonal of a matrix into a vector. $i$ is the row index of the matrix $\boldsymbol\Sigma$, $j$ is the column index and \# is the vector index.}
    \label{tbl:indexing_nd}
    \end{centering}
\end{table}
Numbers of elements in the groups $i$ form an arithmetic progression with step one. Thus, \# corresponding to the maximal elements $j$ in each group is the sum of the progression, from which we obtain 
\begin{equation}
\label{eq:index_hash}
\#(i,j)=\frac{i(i-1)}{2} - (i-1) + j, \quad j \in [1, i-1].
\end{equation}
\begin{definition}
	Given a matrix $\boldsymbol\Sigma$, vector $\boldsymbol\Sigma_{nd}$ is defined through its components: 
	\begin{equation*}
	    \boldsymbol\Sigma_{nd}[\#(i,j)]:=\boldsymbol\Sigma[i,j],
	\end{equation*}
	where $\#(i,j)$ is defined via formula (\ref{eq:index_hash}).
\end{definition}	
\begin{cor}
	\label{cor:all_estim}
	For $\hat \alpha^p$ defined by (\ref{eqn:alpha_hat}), $\hat \alpha^{mult}$--- by (\ref{def:alpha_m}), and $\boldsymbol\Sigma_d$, $\boldsymbol\Sigma_{nd}$--- by (\ref{eqn:matrix_estims})
	\begin{align}
		\begin{split}
			&\begin{pmatrix}
				\hat\alpha^p \\
				\hat {\boldsymbol\Sigma}_d \\
				\hat {\boldsymbol\Sigma}_{nd}  
			\end{pmatrix} 
			\stackrel{a.s.}{\rightarrow}
			\begin{pmatrix}
				\alpha \\
				\boldsymbol\Sigma_d \\
				\boldsymbol\Sigma_{nd}    
			\end{pmatrix} 
			\quad \text{under Assumption \ref{ass:e_Sig_e_n_0} and} \\
			&\begin{pmatrix}
				\hat\alpha^{mult} \\
				\hat{\boldsymbol\Sigma}_d \\
				\hat{\boldsymbol\Sigma}_{nd}  
			\end{pmatrix} 
			\stackrel{a.s.}{\rightarrow}
			\begin{pmatrix}
				\alpha \\
				\boldsymbol\Sigma_d \\
				\boldsymbol\Sigma_{nd}    
			\end{pmatrix}  
			\quad \text{under Assumption \ref{ass:Sigma_d_n_0}}
		\end{split}
	\end{align}	
\end{cor}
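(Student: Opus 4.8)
The plan is to reduce the claimed joint almost-sure convergence to the almost-sure convergence of each coordinate separately, each of which has either already been established in Propositions \ref{prop:alpha_consistency} and \ref{prop:estimators} or follows by replicating their arguments. Recall that if finitely many sequences of random variables each converge almost surely, then the vector they form converges almost surely to the vector of limits: each coordinate convergence holds on an event of probability one, and the intersection of finitely many such events again has probability one, so on that intersection every coordinate converges simultaneously. Since the vectors in the statement have $1 + p + p(p-1)/2$ coordinates, this reduction is valid.

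Next I would dispatch the two $\alpha$-coordinates. The convergence $\hat\alpha^{mult} \to \alpha$ almost surely under Assumption \ref{ass:Sigma_d_n_0} is exactly the second assertion of Proposition \ref{prop:alpha_consistency}, so nothing new is needed for the second vector. For $\hat\alpha^p$ in the first vector, note that its definition (\ref{eqn:alpha_hat}) is obtained from (\ref{def:alpha_s}) by replacing the basis vector $\mathbf{e}_k$ with $\mathbf{e}=(1,\dots,1)$; I would therefore rerun the proof of Proposition \ref{prop:alpha_consistency} verbatim with this substitution. The only place that argument uses the component assumption is to guarantee that the limit $\log|\phi(s_2\mathbf{e}_k)|$ is nonzero so that the continuous mapping theorem applies to $f(\delta_1,\delta_2)=\delta_1/\delta_2$; for $\mathbf{e}$ the relevant limit is $-(\tfrac12 s_2^2\,\mathbf{e}^\top\boldsymbol\Sigma\mathbf{e})^{\alpha/2}$, which is nonzero precisely because Assumption \ref{ass:e_Sig_e_n_0} gives $\mathbf{e}^\top\boldsymbol\Sigma\mathbf{e}\neq 0$. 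This yields $\hat\alpha^p \to \alpha$ almost surely.

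The matrix coordinates then follow from Proposition \ref{prop:estimators}, which asserts that whenever the plug-in estimator satisfies $\hat\alpha \to \alpha$ almost surely, each entry of $\hat{\boldsymbol\Sigma}_d$ and $\hat{\boldsymbol\Sigma}_{nd}$ converges almost surely to the corresponding entry of $\boldsymbol\Sigma$, with no further restriction on $\boldsymbol\Sigma$. For the first vector I would apply this with $\hat\alpha=\hat\alpha^p$ (consistent under Assumption \ref{ass:e_Sig_e_n_0} by the previous step) and for the second with $\hat\alpha=\hat\alpha^{mult}$ (consistent under Assumption \ref{ass:Sigma_d_n_0}). Collecting the finitely many probability-one events — one for the $\alpha$-coordinate and one for each diagonal and sub-diagonal entry — and intersecting them gives the joint almost-sure convergence in each case.

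The step requiring the most care is the interface between the two propositions: the diagonal and off-diagonal estimators are not free-standing but are plug-in statistics that feed $\hat\alpha$ into the exponent $2/\hat\alpha$ of (\ref{eq:log_abs_alpha_hat}). One must ensure that the very same consistent estimator appearing in the first coordinate of the vector is the one substituted into the matrix entries, so that the hypothesis $\hat\alpha \to \alpha$ of Proposition \ref{prop:estimators} is met on a common probability-one event. Since the finite intersection of such events is again almost sure, the joint convergence is not jeopardized by the dependence of the matrix estimators on $\hat\alpha$.
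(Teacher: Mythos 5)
Your proof is correct and takes essentially the same route as the paper, whose entire argument is the one-line citation of Propositions \ref{prop:alpha_consistency} and \ref{prop:estimators} together with the fact that finitely many almost-sure convergences hold jointly on the intersection of their probability-one events. You are in fact more complete than the paper on one point: Proposition \ref{prop:alpha_consistency} covers only $\hat\alpha_s$ and $\hat\alpha^{mult}$, so the consistency of $\hat\alpha^p$ under Assumption \ref{ass:e_Sig_e_n_0} is never actually established there, whereas you obtain it explicitly by rerunning that proposition's argument with $\mathbf{e}$ in place of $\mathbf{e}_k$ and invoking Assumption \ref{ass:e_Sig_e_n_0} exactly where the original proof invokes Assumption \ref{ass:Sigma_d_n_0}.
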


\begin{proof}
	Follows from propositions \ref{prop:alpha_consistency} and \ref{prop:estimators} and properties of the strong convergence.
\end{proof}
While Assumption \ref{ass:Sigma_d_n_0} is easy to check and virtually always holds, Assumption \ref{ass:e_Sig_e_n_0} is more confusing. This seems to be an advantage of $\hat \alpha^{mult}$ over $\hat \alpha^p$.

In the end of this section, we introduce an estimator of $\boldsymbol\mu$ also based on the characteristic functions (\ref{eqn:charac}) and (\ref{eq:phi_empiric}). We write the following expression extracted from (\ref{eqn:charac}):
\begin{align}
\label{eq:im-to-re_phi}
    \frac{\operatorname{Im}(\phi(\mathbf{t}))}{\operatorname{Re}(\phi(\mathbf{t}))} = \tan\left[ \mathbf{t}^T \boldsymbol\mu \right]
\end{align}
If $\mathbf{t}^T \boldsymbol\mu < \pi/2$, then expression (\ref{eq:im-to-re_phi}) is easily invertable. Thus, if there exists vector $\mathbf{M}$ such that $\forall k$ $\boldsymbol\mu[k]<\pi \mathbf{M}[k] /2$, then $\hat{\boldsymbol\mu}$ defined by
\begin{align}
\label{def:mu_estimator}
    \hat{\boldsymbol\mu}[k] := \mathbf{M}[k] \cdot \arctan\left[\frac{\operatorname{Im}(\phi(\mathbf{e}_k/\mathbf{M}[k]))}{\operatorname{Re}(\phi(\mathbf{e}_k/\mathbf{M}[k]))}\right] , \text{ for all } k=1,\dots,p
\end{align}
is a consistent estimator of $\boldsymbol\mu$.

\section[Limiting distribution]{The limiting distribution of the estimators}

Through the application of the multivariate central limit theorem receive the following result:
\begin{prop}
	Let $\mathbf{t}_1, \mathbf{t}_2,..., \mathbf{t}_m$ be a grid of fixed points in the support of \eqref{eqn:charac}. Then
	\label{CLT_Re_Im_vect}	
	\begin{align}
	\label{feuer_CLT_parts_ecf}
	\sqrt{n}\begin{pmatrix} 
	\operatorname{Re}[\hat{\phi}_n(\mathbf{t}_1)] - \operatorname{Re}[\phi(\mathbf{t}_1)] \\
	\operatorname{Re}[\hat{\phi}_n(\mathbf{t}_2)] - \operatorname{Re}[\phi(\mathbf{t}_2)] \\
	\vdots \\
	\operatorname{Re}[\hat{\phi}_n(\mathbf{t}_m)] - \operatorname{Re}[\phi(\mathbf{t}_m)] \\
	\operatorname{Im}[\hat{\phi}_n(\mathbf{t}_1)] - \operatorname{Im}[\phi(\mathbf{t}_1)] \\
	\operatorname{Im}[\hat{\phi}_n(\mathbf{t}_2)] - \operatorname{Im}[\phi(\mathbf{t}_2)] \\
	\vdots \\
	\operatorname{Im}[\hat{\phi}_n(\mathbf{t}_m)] - \operatorname{Im}[\phi(\mathbf{t}_m)] \\
	\end{pmatrix} \stackrel{d}{\rightarrow}
	\mathbb{N}_{2m}(0, \mathbf{\Omega})
	\end{align}
	where the $\mathbf{\Omega}$ consists of four m-by-m smaller  matrices
	\begin{equation}
	\label{Omega_repres}
	\mathbf{\Omega} = \begin{pmatrix} 
	\mathbf{\Omega}_{Re} & \mathbf{\Omega}_{RI} \\
	\mathbf{\Omega}_{RI}^T & \mathbf{\Omega}_{Im}\\
	\end{pmatrix}
	\end{equation}
	\begin{align*} 
	[\mathbf{\Omega}_{Re}]_{jk} =& \operatorname{Cov}\left[ \rm{Re}(e^{i\mathbf{t}^T_jX}), \rm{Re}(e^{i\mathbf{t}^T_kX}) \right] =  
	\frac{1}{2} \rm{Re}(\phi(\mathbf{t}_j+\mathbf{t}_k)) + \frac{1}{2} \rm{Re}(\phi(\mathbf{t}_k-\mathbf{t}_j)) -
	\rm{Re}[\phi(\mathbf{t}_j)] \cdot  \rm{Re} \{\phi(\mathbf{t}_k)\}= \\
	&\frac{1}{2}  \cos \left(\mathbf{t}^T_j\boldsymbol\mu+\mathbf{t}^T_k\boldsymbol\mu \right) \cdot \exp\left[-\left\{\frac{1}{2}(\mathbf{t}_j+\mathbf{t}_k)^T \boldsymbol\Sigma (\mathbf{t}_j+\mathbf{t}_k)\right\}^{\alpha/2}\right] + \\ 
	&\frac{1}{2}  \cos \left(\mathbf{t}^T_k\boldsymbol\mu-\mathbf{t}^T_j\boldsymbol\mu \right) \cdot \exp\left[-\left\{ \frac{1}{2}(\mathbf{t}_k-\mathbf{t}_j)^T \boldsymbol\Sigma (\mathbf{t}_k-\mathbf{t}_j)\right\}^{\alpha/2}\right] - \\
	&\cos \left(\mathbf{t}^T_j\boldsymbol\mu\right) \cdot \cos \left(\mathbf{t}^T_k\boldsymbol\mu \right) \cdot \exp\left[-\left\{\frac{1}{2}\mathbf{t}_j^T \boldsymbol\Sigma \mathbf{t}_j\right\}^{\alpha/2}\right] \cdot  \exp\left[-\left\{\frac{1}{2}\mathbf{t}_k^T \boldsymbol\Sigma \mathbf{t}_k\right\}^{\alpha/2}\right] 
	\end{align*}
    \begin{align*}    
	[\mathbf{\Omega}_{Im}]_{jk} =& \operatorname{Cov}\left[ \rm{Im}(e^{i\mathbf{t}^T_jX}), \rm{Im}(e^{i\mathbf{t}^T_kX}) \right] =  
	\frac{1}{2} \rm{Re}(\phi(\mathbf{t}_j+\mathbf{t}_k)) - \frac{1}{2}  \rm{Re}(\phi(-\mathbf{t}_j+\mathbf{t}_k)) -
	\rm{Re}[\phi(\mathbf{t}_j)]  \cdot  \rm{Im} \{\phi(\mathbf{t}_k)\} \\
	&\frac{1}{2}  \cos \left(\mathbf{t}^T_j\boldsymbol\mu+\mathbf{t}^T_k\boldsymbol\mu \right) \cdot \exp\left[-\left\{\frac{1}{2} (\mathbf{t}_j+\mathbf{t}_k)^T \boldsymbol\Sigma (\mathbf{t}_j+\mathbf{t}_k)\right\}^{\alpha/2}\right] - \\ 
	&\frac{1}{2}  \cos \left(\mathbf{t}^T_k\boldsymbol\mu-\mathbf{t}^T_j\boldsymbol\mu \right) \cdot \exp\left[-\left\{\frac{1}{2} (\mathbf{t}_k-\mathbf{t}_j)^T \boldsymbol\Sigma (\mathbf{t}_k-\mathbf{t}_j)\right\}^{\alpha/2}\right] - \\
	&\cos \left(\mathbf{t}^T_j\boldsymbol\mu\right) \cdot \sin \left(\mathbf{t}^T_k\boldsymbol\mu \right) \cdot \exp\left[-\left\{\frac{1}{2}\mathbf{t}_j^T \boldsymbol\Sigma \mathbf{t}_j\right\}^{\alpha/2}\right] \cdot \exp\left[-\left\{\frac{1}{2}\mathbf{t}_k^T \boldsymbol\Sigma \mathbf{t}_k\right\}^{\alpha/2}\right] 
	\end{align*}
    \begin{align*}
	[\mathbf{\Omega}_{RI}]_{jk} =& \operatorname{Cov}\left[ \rm{Re}(e^{i\mathbf{t}^T_jX}), \rm{Im}(e^{i\mathbf{t}^T_kX}) \right] = \frac{1}{2} \rm{Im}(\phi(\mathbf{t}_j+\mathbf{t}_k)) + \frac{1}{2} \rm{Im}(\phi(-\mathbf{t}_j+\mathbf{t}_k)) -
	\rm{Im}[\phi(\mathbf{t}_j)] \cdot \rm{Re} \{\phi(\mathbf{t}_k)\} \\
	&\frac{1}{2}  \sin \left(\mathbf{t}^T_j\boldsymbol\mu+\mathbf{t}^T_k\boldsymbol\mu \right) \cdot \exp\left[- \left\{\frac{1}{2} (\mathbf{t}_j+\mathbf{t}_k)^T \boldsymbol\Sigma (\mathbf{t}_j+\mathbf{t}_k)\right\}^{\alpha/2}\right] + \\ 
	&\frac{1}{2}  \sin \left(\mathbf{t}^T_k\boldsymbol\mu-\mathbf{t}^T_j\boldsymbol\mu \right) \cdot \exp\left[-\left\{\frac{1}{2} (\mathbf{t}_k-\mathbf{t}_j)^T \boldsymbol\Sigma (\mathbf{t}_k-\mathbf{t}_j)\right\}^{\alpha/2}\right] - \\
	&\sin \left(\mathbf{t}^T_j\boldsymbol\mu\right) \cdot \cos \left(\mathbf{t}^T_k\boldsymbol\mu \right) \exp\left[-\left\{\frac{1}{2}\mathbf{t}_j^T \boldsymbol\Sigma \mathbf{t}_j\right\}^{\alpha/2}\right] \exp\left[-\left\{\frac{1}{2}\mathbf{t}_k^T \boldsymbol\Sigma \mathbf{t}_k\right\}^{\alpha/2}\right] \\ 
	\end{align*}
	$j,k = 1 \dots m $
\end{prop}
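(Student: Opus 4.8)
The plan is to identify the random vector in \eqref{feuer_CLT_parts_ecf} as $\sqrt{n}$ times a centred i.i.d.\ sample mean and then apply the classical multivariate central limit theorem. Writing $X$ for a generic observation $\mathbf{x}_1$, set
\[
\mathbf{Y}_j := \bigl(\cos(\mathbf{t}_1^\top\mathbf{x}_j),\dots,\cos(\mathbf{t}_m^\top\mathbf{x}_j),\ \sin(\mathbf{t}_1^\top\mathbf{x}_j),\dots,\sin(\mathbf{t}_m^\top\mathbf{x}_j)\bigr)^\top\in\mathbb{R}^{2m}.
\]
Because $\operatorname{Re}[\hat\phi_n(\mathbf{t})]=n^{-1}\sum_j\cos(\mathbf{t}^\top\mathbf{x}_j)$ and $\operatorname{Im}[\hat\phi_n(\mathbf{t})]=n^{-1}\sum_j\sin(\mathbf{t}^\top\mathbf{x}_j)$, the left-hand side of \eqref{feuer_CLT_parts_ecf} is exactly $\sqrt{n}\bigl(\bar{\mathbf{Y}}_n-\mathbb{E}[\mathbf{Y}_1]\bigr)$, where the entries of $\mathbb{E}[\mathbf{Y}_1]$ are $\operatorname{Re}[\phi(\mathbf{t}_k)]$ and $\operatorname{Im}[\phi(\mathbf{t}_k)]$ by the very definition of the characteristic function.

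First I would note that the $\mathbf{Y}_j$ are i.i.d.\ (as the $\mathbf{x}_j$ are) and that each coordinate is bounded in modulus by $1$; hence all second moments are finite and $\mathbf{\Omega}:=\operatorname{Cov}(\mathbf{Y}_1)$ is a well-defined, finite $2m\times 2m$ matrix. The multivariate Lindeberg--L\'evy central limit theorem then delivers $\sqrt{n}\bigl(\bar{\mathbf{Y}}_n-\mathbb{E}[\mathbf{Y}_1]\bigr)\stackrel{d}{\to}\mathbb{N}_{2m}(0,\mathbf{\Omega})$, which is the assertion; the block decomposition \eqref{Omega_repres} is merely the partition of $\mathbf{\Omega}$ into the cosine--cosine, sine--sine and cosine--sine sub-blocks.

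It then remains to put the entries of $\mathbf{\Omega}$ into closed form, which I would do with the product-to-sum identities $\cos A\cos B=\tfrac12[\cos(A-B)+\cos(A+B)]$, $\sin A\sin B=\tfrac12[\cos(A-B)-\cos(A+B)]$ and $\cos A\sin B=\tfrac12[\sin(A+B)-\sin(A-B)]$ applied with $A=\mathbf{t}_j^\top X$ and $B=\mathbf{t}_k^\top X$. Taking expectations converts every term on the right into a real or imaginary part of $\phi$ evaluated at $\mathbf{t}_j\pm\mathbf{t}_k$, upon recalling that $\operatorname{Re}\phi$ is even and $\operatorname{Im}\phi$ odd; for instance $\mathbb{E}[\cos(\mathbf{t}_j^\top X)\cos(\mathbf{t}_k^\top X)]=\tfrac12\operatorname{Re}[\phi(\mathbf{t}_k-\mathbf{t}_j)]+\tfrac12\operatorname{Re}[\phi(\mathbf{t}_j+\mathbf{t}_k)]$, and subtracting $\operatorname{Re}[\phi(\mathbf{t}_j)]\operatorname{Re}[\phi(\mathbf{t}_k)]$ gives the stated $[\mathbf{\Omega}_{Re}]_{jk}$. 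The same two steps produce $[\mathbf{\Omega}_{Im}]_{jk}$ and $[\mathbf{\Omega}_{RI}]_{jk}$. The fully explicit second lines then follow by substituting \eqref{eqn:charac}, whose modulus contributes the factors $\exp[-\{\tfrac12(\cdot)^\top\boldsymbol\Sigma(\cdot)\}^{\alpha/2}]$ and whose phase $e^{i\mathbf{t}^\top\boldsymbol\mu}$ contributes the $\cos(\cdot^\top\boldsymbol\mu)$ or $\sin(\cdot^\top\boldsymbol\mu)$ factors.

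The probabilistic core is immediate: bounded i.i.d.\ summands make the CLT applicable with no moment, Lindeberg or tightness condition left to verify, and no regularity of $\phi$ is needed since only point evaluations enter. The single genuine obstacle is therefore bookkeeping---tracking the signs and the $\mathbf{t}_j\pm\mathbf{t}_k$ arguments correctly through the parities of $\operatorname{Re}\phi$ and $\operatorname{Im}\phi$ and through the substitution of \eqref{eqn:charac}---which is routine but error-prone.
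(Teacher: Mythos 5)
Your overall route coincides with the paper's: the weak convergence itself is just the classical multivariate CLT applied to the bounded i.i.d.\ vectors of cosines and sines, so the entire content of the proposition is the evaluation of $\mathbf{\Omega}$, which both you and the paper carry out by reducing second moments of $\cos(\mathbf{t}_j^\top\mathbf{X})$ and $\sin(\mathbf{t}_k^\top\mathbf{X})$ to values of $\phi$ at $\mathbf{t}_j\pm\mathbf{t}_k$. The paper does this in complex form, setting $Y(\mathbf{t})=e^{i\mathbf{t}^\top\mathbf{X}}$, using the bilinear identity $\operatorname{Cov}[Y(\mathbf{t}),Y(\boldsymbol\tau)]=\phi(\mathbf{t}+\boldsymbol\tau)-\phi(\mathbf{t})\phi(\boldsymbol\tau)$ and writing $\operatorname{Re}$, $\operatorname{Im}$ as combinations of $Y(\pm\mathbf{t})$; your product-to-sum identities are the real-variable version of exactly that computation.

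There is, however, one genuine problem with your final step: the claim that ``the same two steps produce'' the stated $[\mathbf{\Omega}_{Im}]_{jk}$ and $[\mathbf{\Omega}_{RI}]_{jk}$. They do not. Carried out, your computation gives
\begin{align*}
[\mathbf{\Omega}_{Im}]_{jk} &= \tfrac{1}{2}\operatorname{Re}[\phi(\mathbf{t}_k-\mathbf{t}_j)] - \tfrac{1}{2}\operatorname{Re}[\phi(\mathbf{t}_j+\mathbf{t}_k)] - \operatorname{Im}[\phi(\mathbf{t}_j)]\cdot\operatorname{Im}[\phi(\mathbf{t}_k)],
\\
{}[\mathbf{\Omega}_{RI}]_{jk} &= \tfrac{1}{2}\operatorname{Im}[\phi(\mathbf{t}_j+\mathbf{t}_k)] + \tfrac{1}{2}\operatorname{Im}[\phi(\mathbf{t}_k-\mathbf{t}_j)] - \operatorname{Re}[\phi(\mathbf{t}_j)]\cdot\operatorname{Im}[\phi(\mathbf{t}_k)],
\end{align*}
which disagree with the statement: there the two cosine terms of $\mathbf{\Omega}_{Im}$ carry the opposite signs, and the cross terms read $\operatorname{Re}[\phi(\mathbf{t}_j)]\operatorname{Im}[\phi(\mathbf{t}_k)]$ and $\operatorname{Im}[\phi(\mathbf{t}_j)]\operatorname{Re}[\phi(\mathbf{t}_k)]$, respectively. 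Your versions are the correct ones: for $j=k$ and $\boldsymbol\mu=\mathbf{0}$ the stated $[\mathbf{\Omega}_{Im}]_{jj}$ reduces to $\tfrac{1}{2}\left(\operatorname{Re}[\phi(2\mathbf{t}_j)]-1\right)$, strictly negative whenever $\mathbf{t}_j^\top\boldsymbol\Sigma\mathbf{t}_j>0$, although it must equal $\operatorname{Var}[\sin(\mathbf{t}_j^\top\mathbf{X})]\geq 0$, which is what your formula yields. The discrepancy originates in the paper's own proof, which writes $\operatorname{Im}(e^{i\mathbf{t}^\top\mathbf{X}})=\tfrac{1}{2}[Y(\mathbf{t})-Y(-\mathbf{t})]$, omitting the factor $1/i$, and later replaces $\phi(\mathbf{t}_l)-\phi(-\mathbf{t}_l)$ by $2\operatorname{Im}[\phi(\mathbf{t}_l)]$ instead of $2i\operatorname{Im}[\phi(\mathbf{t}_l)]$. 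So your real-variable route is in fact the safer of the two, precisely because it never manipulates covariances of complex quantities; but to make the proof complete you must execute the $\mathbf{\Omega}_{Im}$ and $\mathbf{\Omega}_{RI}$ computations explicitly and record the corrected formulas rather than assert agreement with the statement. The block $\mathbf{\Omega}_{Re}$, which you did verify, is correct as stated.
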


The proof is given in Section \ref{sec:Proofs}.

In the following theorem we apply the delta method to the statistic (\ref{eq:phi_empiric}) and CLT (\ref{feuer_CLT_parts_ecf}) to obtain the limiting distribution of $(\hat \alpha, \hat{\boldsymbol\Sigma}_d, \hat{\boldsymbol\Sigma}_{nd})$. 
\begin{thm}	
\label{thm:conf_interv}
Under Assumption \ref{ass:Sigma_d_n_0}
\begin{equation}
\label{CLT_main_statem}
    \sqrt{n}\begin{pmatrix}
    \hat\alpha^{mult} - \alpha \\
    \hat{\boldsymbol\Sigma}_d - \boldsymbol\Sigma_d \\
    \hat{\boldsymbol\Sigma}_{nd} - \boldsymbol\Sigma_{nd}    
    \end{pmatrix} 
    \xrightarrow[n \to \infty]{d}
    \mathbb{N}_{p(p+1)/2 + 1}(0, \mathbf{G}^\top \mathbf{\Omega} \mathbf{G})
\end{equation}
where $\mathbf{\Omega}$ is defined in (\ref{Omega_repres}) and
$\mathbf{G}$ is a $(\frac{p^2+p}{2}+1) \times 2(p^2+p)$ matrix whose elements are given below:
\begin{equation}
\mathbf{G} = \begin{pmatrix}
 \left(\boldsymbol{A}_s^1 \odot \boldsymbol{R}_{\phi}^1\right)^T & \left(\boldsymbol{A}_s^2 \odot \boldsymbol{R}_{\phi}^2\right)^T & \boldsymbol{0} & \boldsymbol{0} &  \left(\boldsymbol{A}_s^1 \odot \boldsymbol{Im}_{\phi}^1\right)^T & \left(\boldsymbol{A}_s^2 \odot \boldsymbol{Im}_{\phi}^2\right)^T & \boldsymbol{0} & \boldsymbol{0}\\
 \boldsymbol{B}^{r}_1 & \boldsymbol{B}^{r}_2 & \boldsymbol{0} & \boldsymbol{0} & \boldsymbol{B}^{i}_1 & \boldsymbol{B}^{i}_2 & \boldsymbol{0} & \boldsymbol{0}\\
 \boldsymbol{J}^r_1 & \boldsymbol{J}^r_2 & \boldsymbol{J}^r_+ & \boldsymbol{J}^r_- & \boldsymbol{J}^i_1 & \boldsymbol{J}^i_2 & \boldsymbol{J}^i_+ & \boldsymbol{J}^i_-\\ 
\end{pmatrix} 
\end{equation}
\begin{equation}
\begin{split}
        \boldsymbol{A}_s^1 [k]:=\left[\log \big|\frac{s_1}{s_2}\big| \cdot \log|\phi(s_1\mathbf{e}_{k})| \cdot |\phi(s_1 \mathbf{e}_k)| \right]^{-1} \qquad
        \boldsymbol{A}_s^2 [k]:=-\left[ \log \left|\frac{s_1}{s_2} \right| \cdot \log|\phi(s_2\mathbf{e}_{k})| \cdot |\phi(s_2 \mathbf{e}_k)| \right]^{-1}        
\end{split}
\end{equation}
\begin{equation}
\begin{split}
    \boldsymbol{B}^r_1[j,k] = \mathbf{b}_1[j,k] \cdot \frac{\rm{Re}[\phi(s_1 \mathbf{e}_k)]}{|\phi(s_1 \mathbf{e}_k)|} \quad
    \boldsymbol{B}^i_1[j,k] = \mathbf{b}_1[j,k] \cdot \frac{\rm{Im}[\phi(s_1 \mathbf{e}_k)]}{|\phi(s_1 \mathbf{e}_k)|} \quad
    \boldsymbol{b}_1[j,k] =    \frac{-\boldsymbol\Sigma_{jj} \cdot \log \left( \frac{s^2_1}{2} \boldsymbol\Sigma_{jj} \right)}{
    \log\left|\frac{s_1}{s_2}\right| \cdot   \log|\phi(s_1\mathbf{e}_k)| \cdot |\phi(s_1 \mathbf{e}_k)|}
\end{split}
\end{equation}
$\forall j=1, \dots, p; k=1, \dots, p \text{ such that } j \neq k.$
\begin{equation}
    diag(\mathbf{B}^i_1) = \mathbf{b}_d \odot \boldsymbol{Im}_{\phi}^1 \quad
    diag(\mathbf{B}^r_1) = \mathbf{b}_d \odot \boldsymbol{R}_{\phi}^1  \text{, where } \quad
    \mathbf{b}_d[j] = \frac{2\boldsymbol\Sigma_{jj}}{\alpha \cdot \log| \phi(s_1\mathbf{e}_j)| \cdot | \phi(s_1\mathbf{e}_j)|} \left(1 - \frac{\log\left(\frac{s_1^2 \boldsymbol\Sigma_{jj}}{2}\right)}{2\log\left|\frac{s_1}{s_2}\right|}\right)    
\end{equation}
and $\boldsymbol{R}_{\phi}^1$ and $\boldsymbol{Im}_{\phi}^1$ are defined by (\ref{def:ARIm_1}). Here, $\odot$ stands for Hadamard product of arrays.
\begin{equation}
\begin{split}
    \boldsymbol{B}^{r}_2[j,k] = \boldsymbol{B}_2[j,k] \cdot \frac{\rm{Re}[\phi(s_2 \mathbf{e}_k)]}{|\phi(s_2 \mathbf{e}_k)|} \quad \boldsymbol{B}^{i}_2[j,k] = \boldsymbol{B}_2[j,k] \cdot \frac{\rm{Im}[\phi(s_2 \mathbf{e}_k)]}{|\phi(s_2 \mathbf{e}_k)|} \\ 
    \boldsymbol{B}_2[j,k] = \frac{\boldsymbol\Sigma_{jj} \cdot \log \left( \frac{s^2_1}{2} \boldsymbol\Sigma_{jj} \right)}{
    \alpha \cdot \log\left|\frac{s_1}{s_2}\right| \cdot \log|\phi(s_2\mathbf{e}_k)| \cdot |\phi(s_2 \mathbf{e}_k)|}
\end{split}
\end{equation}
$\forall j=1, \dots, p; k=1, \dots, p$.
\begin{equation}%\label{eqn:quad_vol_est}
\begin{split}
    &\boldsymbol{J^r_1}\big(\#(i,j), k\big) = \frac{\mathbf{D}_1\big(\#(i,j)\big) \cdot \rm{Re}[\phi(s_1 \mathbf{e}_k)]}{\log|\phi(s_1\mathbf{e}_k)| \cdot |\phi(s_1\mathbf{e}_k)|^2}     
    \quad
    \boldsymbol{J^i_1}\big(\#(i,j), k\big) = \frac{\mathbf{D}_1\big(\#(i,j)\big) \cdot \rm{Im}[\phi(s_1 \mathbf{e}_k)]}{\log|\phi(s_1\mathbf{e}_k)| \cdot |\phi(s_1\mathbf{e}_k)|^2} 
    \\
    &\boldsymbol{J^r_2}\big(\#(i,j), k\big) = \frac{\mathbf{D}_1\big(\#(i,j)\big) \cdot \rm{Re}[\phi(s_2 \mathbf{e}_k)]}{- \log|\phi(s_2\mathbf{e}_k)| \cdot |\phi(s_2\mathbf{e}_k)|^2} 
    \quad
    \boldsymbol{J^i_2}\big(\#(i,j), k\big) = \frac{\mathbf{D}_1\big(\#(i,j)\big) \cdot \rm{Im}[\phi(s_2 \mathbf{e}_k)]}{- \log|\phi(s_2\mathbf{e}_k)| \cdot |\phi(s_2\mathbf{e}_k)|^2 }          
\end{split}
\end{equation}
\begin{equation}%\label{eqn:quad_vol_est}
\begin{split}
    &\mathbf{D}_1(\#(i,j)) = 
    \Big[ \Big(\frac{1}{2}\boldsymbol\Sigma_{ii} + \boldsymbol\Sigma_{ij} + \frac{1}{2}\boldsymbol\Sigma_{jj}\Big) \cdot  \log \Big(\frac{1}{2}\boldsymbol\Sigma_{ii} + \boldsymbol\Sigma_{ij} + \frac{1}{2}\boldsymbol\Sigma_{jj}\Big)- 
    \\
    &\Big(\frac{1}{2}\boldsymbol\Sigma_{ii} - \boldsymbol\Sigma_{ij} + \frac{1}{2}\boldsymbol\Sigma_{jj}\Big) \cdot  \log \Big(\frac{1}{2}\boldsymbol\Sigma_{ii} - \boldsymbol\Sigma_{ij} + \frac{1}{2}\boldsymbol\Sigma_{jj}\Big)\Big] \cdot \left(2 \alpha \cdot \log\left|\frac{s_1}{s_2}\right|\right)^{-1} \text{ is a column-vector.}
\end{split}
\end{equation}
$\#$ is defined in (\ref{eq:index_hash}), $i \in [1, p], j \in [1, i-1]; k \in [1, p]$.
\begin{equation}
\begin{split}
    &\boldsymbol{J}^r_+(k,k') = 0 \text{ and }
    \boldsymbol{J}^i_+(k,k') = 0 \text{ when  } k \neq k'.
    \\
    &diag(\boldsymbol{J}^r_+) = \boldsymbol{d}_+ \odot \boldsymbol{R}^+_{\phi} \quad 
    diag(\boldsymbol{J}^i_+) = \boldsymbol{d}_+ \odot \boldsymbol{Im}^+_{\phi} \text{, where} 
    \\
    &\boldsymbol{d}_+[\#(i,j)] = -\frac{1}{\alpha \cdot |\phi(\mathbf{e}_i+\mathbf{e}_j)|} \left(\frac{1}{2} \boldsymbol\Sigma_{ii} + \boldsymbol\Sigma_{ij} + \frac{1}{2} \boldsymbol\Sigma_{jj}\right)^{(1-\frac{\alpha}{2})}    
\end{split}
\end{equation}
\begin{equation}
\begin{split}
    \boldsymbol{R}^+_{\phi}[\#(i,j)] = \frac{ \operatorname{Re} \left(\phi(\mathbf{e}_i+\mathbf{e}_j)\right)}{|\phi(\mathbf{e}_i+\mathbf{e}_j)|}
    \quad 
    \boldsymbol{Im}^+_{\phi}[\#(i,j)] = \frac{\operatorname{Im} \left(\phi(\mathbf{e}_i+\mathbf{e}_j)\right)}{|\phi(\mathbf{e}_i+\mathbf{e}_j)|}
\end{split}
\end{equation}

\begin{equation}
\begin{split}
&\boldsymbol{J}^r_-(k,k') = 0 \text{ and }
\boldsymbol{J}^i_-(k,k') = 0 \text{ when  } k \neq k'.
\\
&diag(\boldsymbol{J}^r_-) = \boldsymbol{d}_- \odot \boldsymbol{R}^-_{\phi} \quad 
diag(\boldsymbol{J}^i_-) = \boldsymbol{d}_- \odot \boldsymbol{Im}^-_{\phi} \text{, where} \\
&\boldsymbol{d}_-[\#(i,j)] = \frac{1}{\alpha \cdot |\phi(\mathbf{e}_i-\mathbf{e}_j)|} \left(\frac{1}{2} \boldsymbol\Sigma_{ii} - \boldsymbol\Sigma_{ij} + \frac{1}{2} \boldsymbol\Sigma_{jj}\right)^{(1-\frac{\alpha}{2})}    
\end{split}
\end{equation}
\begin{equation}
\begin{split}
\boldsymbol{R}^+_{\phi}[\#(i,j)] = \frac{ \operatorname{Re} \left(\phi(\mathbf{e}_i-\mathbf{e}_j)\right)}{|\phi(\mathbf{e}_i-\mathbf{e}_j)|}
\quad 
\boldsymbol{Im}^+_{\phi}[\#(i,j)] = \frac{\operatorname{Im} \left(\phi(\mathbf{e}_i-\mathbf{e}_j)\right)}{|\phi(\mathbf{e}_i-\mathbf{e}_j)|}
\end{split}
\end{equation}
\end{thm}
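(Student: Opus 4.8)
The plan is to derive (\ref{CLT_main_statem}) as a direct application of the multivariate delta method to the central limit theorem of Proposition \ref{CLT_Re_Im_vect}. First I would pin down the grid $\mathbf{t}_1,\dots,\mathbf{t}_m$ to be exactly the points at which the three estimators read the empirical characteristic function: the $2p$ points $s_1\mathbf{e}_k,\,s_2\mathbf{e}_k$ ($k=1,\dots,p$) feeding $\hat\alpha^{mult}$ and the diagonal estimator, together with the $p(p-1)$ points $\mathbf{e}_i+\mathbf{e}_j,\,\mathbf{e}_i-\mathbf{e}_j$ ($1\le j<i\le p$) feeding the off-diagonal estimator. This gives $m=p(p+1)$ and hence a stacked real/imaginary vector of dimension $2m=2(p^2+p)$, matching the column count of $\mathbf{G}$, so that (\ref{feuer_CLT_parts_ecf}) supplies the input asymptotic normality with covariance $\mathbf{\Omega}$.

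Next I would write the target $(\hat\alpha^{mult},\hat{\boldsymbol\Sigma}_d,\hat{\boldsymbol\Sigma}_{nd})$ as a single map $g$ of this $2m$-vector, obtained by composing $|z|=\sqrt{\operatorname{Re}^2+\operatorname{Im}^2}$, the logarithm, the log-of-ratio of (\ref{def:alpha_s})--(\ref{def:alpha_m}), and the power $x\mapsto x^{2/\hat\alpha}$ of (\ref{eqn:matrix_estims}). The delta method requires $g$ to be differentiable at the true value $\mathbf{Y}=(\operatorname{Re}\phi(\mathbf{t}_l),\operatorname{Im}\phi(\mathbf{t}_l))_l$; this holds because every quadratic form entering a logarithm is strictly positive, namely $\tfrac12 s_j^2\boldsymbol\Sigma_{kk}>0$ at the points $s_j\mathbf{e}_k$ by Assumption \ref{ass:Sigma_d_n_0} and $\tfrac12(\mathbf{e}_i\pm\mathbf{e}_j)^\top\boldsymbol\Sigma(\mathbf{e}_i\pm\mathbf{e}_j)>0$ at the remaining points (no two components being perfectly correlated), so that all denominators $|\phi|$ and $\log|\phi|$ are nonzero. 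Given differentiability, the standard result yields $\sqrt{n}\big(g(\hat{\mathbf{Y}}_n)-g(\mathbf{Y})\big)\xrightarrow{d}\mathbb{N}(0,\mathbf{G}^\top\mathbf{\Omega}\mathbf{G})$ with $\mathbf{G}$ the Jacobian of $g$ at $\mathbf{Y}$ arranged as in the statement, while Corollary \ref{cor:all_estim} identifies the centering $g(\mathbf{Y})$ with $(\alpha,\boldsymbol\Sigma_d,\boldsymbol\Sigma_{nd})$.

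The heart of the argument, and its main obstacle, is the explicit row-by-row computation of $\mathbf{G}$ by the chain rule, using $\partial|\phi|/\partial\operatorname{Re}\phi=\operatorname{Re}\phi/|\phi|$, the analogous rule for $\operatorname{Im}\phi$, and $\partial\log|\phi|/\partial|\phi|=1/|\phi|$. The $\alpha$-row is a plain differentiation of the log-ratio in (\ref{def:alpha_m}); it touches only the $s_1\mathbf{e}_k,s_2\mathbf{e}_k$ coordinates and produces the factors $\boldsymbol{A}_s^1,\boldsymbol{A}_s^2$ weighted by $\boldsymbol{R}_\phi,\boldsymbol{Im}_\phi$, with zero blocks at the $\mathbf{e}_i\pm\mathbf{e}_j$ coordinates. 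The delicate bookkeeping is in the covariance rows, since each covariance estimator depends on the characteristic function \emph{twice}: directly through the base $-\log|\hat\phi_n|$ of the power, and indirectly through the exponent $2/\hat\alpha^{mult}$, which is itself a function of all the $s_1\mathbf{e}_k,s_2\mathbf{e}_k$ values. I would therefore split every covariance derivative into a direct part and an $\hat\alpha$-mediated part. For the diagonal row this explains why $\boldsymbol{B}^r_2,\boldsymbol{B}^i_2$ (the $s_2$ columns, entering only through $\hat\alpha$) are purely mediated, whereas the diagonal entries of $\boldsymbol{B}^r_1,\boldsymbol{B}^i_1$ carry an extra direct term recorded in $\mathbf{b}_d$; for the off-diagonal row it explains the split between $\boldsymbol{J}_1,\boldsymbol{J}_2$ (mediated, via $\mathbf{D}_1$, the derivative of $x^{2/\hat\alpha}$ in $\hat\alpha$) and the diagonal blocks $\boldsymbol{J}_+,\boldsymbol{J}_-$ (direct, via $\boldsymbol{d}_\pm$, the derivative of the base at $\mathbf{e}_i\pm\mathbf{e}_j$).

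Finally I would substitute the true characteristic-function values into these derivatives and simplify using $-\log|\phi(s_j\mathbf{e}_k)|=(\tfrac12 s_j^2\boldsymbol\Sigma_{kk})^{\alpha/2}$ together with the analogous identities $-\log|\phi(\mathbf{e}_i\pm\mathbf{e}_j)|=(\tfrac12(\mathbf{e}_i\pm\mathbf{e}_j)^\top\boldsymbol\Sigma(\mathbf{e}_i\pm\mathbf{e}_j))^{\alpha/2}$, which collapse the power-rule factors to the stated closed forms $\boldsymbol{A}_s^i$, $\boldsymbol{B}$, $\boldsymbol{J}$, $\mathbf{b}_d$, $\mathbf{D}_1$, $\boldsymbol{d}_\pm$. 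The vanishing blocks are then immediate, since $\hat\alpha^{mult}$ and the diagonal estimator never read the $\mathbf{e}_i\pm\mathbf{e}_j$ coordinates, and the off-diagonal estimator reads $s_1\mathbf{e}_k,s_2\mathbf{e}_k$ only through $\hat\alpha^{mult}$. Assembling the three row-blocks yields $\mathbf{G}$ and completes the delta-method computation of the limiting covariance $\mathbf{G}^\top\mathbf{\Omega}\mathbf{G}$.
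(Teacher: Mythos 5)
Your proposal follows essentially the same route as the paper's own proof: the same grid of $p^2+p$ points $s_1\mathbf{e}_k,\,s_2\mathbf{e}_k,\,\mathbf{e}_i\pm\mathbf{e}_j$, the multivariate delta method applied to the stacked real/imaginary parts of the empirical characteristic function with covariance $\mathbf{\Omega}$, and the same chain-rule bookkeeping through $|\hat\phi_n|$ that splits each Jacobian entry into a direct part and an $\hat\alpha^{mult}$-mediated part before evaluating at the true characteristic-function values. Your explicit observation that differentiability of $g$ requires the quadratic forms $\tfrac12(\mathbf{e}_i\pm\mathbf{e}_j)^\top\boldsymbol\Sigma(\mathbf{e}_i\pm\mathbf{e}_j)$ to be strictly positive (i.e.\ no perfectly correlated components) is a hypothesis the paper uses implicitly but never states.
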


The proof is given in Section \ref{sec:Proofs}.

The form of the covariance matrix of the limiting distribution in (\ref{CLT_main_statem}) has an advantage and a disadvantage. On one hand, it allows to extend Theorem \ref{CLT_main_statem} to include $\hat{\boldsymbol\mu}[k]$ estimator from (\ref{def:mu_estimator}). In such a case Jacobian matrix $\mathbf{G}$ would be augmented with entries corresponding to $\hat{\boldsymbol\mu}[k]$. On the other hand, the covariance matrix is cumbersome and hard to work with in practice, but luckily could be simplified. In particular, the number of entries in matrix $\mathbf{G}$ can be reduced two times. The following corollary trades of the extensibility for simplicity.
\begin{cor}
	Let $\mathbf{x}\sim S_p(\boldsymbol\mu, \boldsymbol\Sigma, \boldsymbol{\alpha})$. Consider vector $w: w_k=\exp(i\tilde{\mathbf{t}}^\top_k \mathbf{x})$ with $\tilde{\mathbf{t}}$ defined in (\ref{def:t_tilde}). Under Assumption \ref{ass:Sigma_d_n_0}
	\begin{equation}
	\sqrt{n}\begin{pmatrix}
	\hat\alpha^{mult} - \alpha \\
	\hat{\boldsymbol\Sigma}_d - \boldsymbol\Sigma_d \\
	\hat{\boldsymbol\Sigma}_{nd} - \boldsymbol\Sigma_{nd}    
	\end{pmatrix} 
	\xrightarrow[n \to \infty]{d}
	\mathbb{N}_{p(p+1)/2 + 1}(0, \tilde{\mathbf{G}}^\top \tilde{\mathbf{\Omega}} \tilde{\mathbf{G}}) \quad,
	\end{equation}
	where $\tilde{\mathbf{\Omega}}$ is the covariance matrix of $w$ and
	$\tilde{\mathbf{G}}$ is a $(\frac{p^2+p}{2}+1) \times (p^2+p)$ matrix whose elements are given below.
	\begin{equation}
	\mathbf{G} = \begin{pmatrix}
	\left(\boldsymbol{A}_s^1 \right)^T & \left(\boldsymbol{A}_s^2 \right)^T & \boldsymbol{0} & \boldsymbol{0}\\
	\boldsymbol{B}_1 & \boldsymbol{B}_2 & \boldsymbol{0} & \boldsymbol{0}\\
	\boldsymbol{J}_1 & \boldsymbol{J}_2 & \boldsymbol{J}_+ & \boldsymbol{J}_- \\ 
	\end{pmatrix} 
	\end{equation}
	\begin{equation}
	\begin{split}
	\boldsymbol{B}_1[j,k] = \boldsymbol{b}_1[j,k] \quad \forall j=1, \dots, p; k=1, \dots, p \text{ such that } j \neq k.
	\end{split}
	\end{equation}
	
	\begin{equation}
	diag(\boldsymbol{B}_1) = \mathbf{b}_d
	\end{equation}
	\begin{equation}%\label{eqn:quad_vol_est}
	\begin{split}
	&\boldsymbol{J_1}\big(\#(i,j), k\big) = \frac{\mathbf{D}_1\big(\#(i,j)\big)}{\log|\phi(s_1\mathbf{e}_k)| \cdot |\phi(s_1\mathbf{e}_k)|}     
	\quad
	\boldsymbol{J_2}\big(\#(i,j), k\big) = \frac{\mathbf{D}_1\big(\#(i,j)\big)}{- \log|\phi(s_2\mathbf{e}_k)| \cdot |\phi(s_2\mathbf{e}_k)| }          
	\end{split}
	\end{equation}
	\begin{equation}
	\begin{split}
	&\boldsymbol{J}_+(k,k') = 0 \text{ when  } k \neq k' \text{ and } 
	diag(\boldsymbol{J}_+) = \boldsymbol{d}_+    
	\\
	&\boldsymbol{J}_-(k,k') = 0 \text{ when  } k \neq k' \text{ and } diag(\boldsymbol{J}_-) = \boldsymbol{d}_- \quad   
	\end{split}
	\end{equation}
\end{cor}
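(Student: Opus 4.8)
The plan is to recognize that this corollary merely re-expresses the conclusion of Theorem \ref{thm:conf_interv}: the limiting law is the same $\mathbb{N}_{p(p+1)/2+1}(0,\cdot)$, and only the factorization of its asymptotic covariance changes, from $\mathbf{G}^\top\mathbf{\Omega}\mathbf{G}$ into the more compact $\tilde{\mathbf{G}}^\top\tilde{\mathbf{\Omega}}\tilde{\mathbf{G}}$. Accordingly it suffices to re-run the delta-method argument of the theorem while keeping the empirical characteristic function complex rather than splitting it into its real and imaginary parts. The structural fact that makes the halving of the Jacobian possible is that each of $\hat\alpha^{mult}$, $\hat{\boldsymbol\Sigma}_d$ and $\hat{\boldsymbol\Sigma}_{nd}$ depends on the sample only through the moduli $|\hat\phi_n(\tilde{\mathbf{t}}_k)|$; this is immediate from the defining formulas (\ref{def:alpha_s}), (\ref{def:alpha_m}) and (\ref{eqn:matrix_estims}), and it is exactly what produces the Hadamard-product structure $\boldsymbol{B}^r_1=\mathbf{b}_1\odot\boldsymbol{R}^1_{\phi}$, $\boldsymbol{B}^i_1=\mathbf{b}_1\odot\boldsymbol{Im}^1_{\phi}$, and the analogous factorizations in the first and third block-rows of $\mathbf{G}$.

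First I would apply the i.i.d.\ multivariate central limit theorem to the complex sample average $\hat\phi_n(\tilde{\mathbf{t}})=n^{-1}\sum_{j}(w_{1,j},\dots,w_{m,j})$, obtaining $\sqrt{n}(\hat\phi_n(\tilde{\mathbf{t}})-\phi(\tilde{\mathbf{t}}))\xrightarrow{d}$ a Gaussian limit with covariance $\tilde{\mathbf{\Omega}}=\operatorname{Cov}(w)$; this is the complex counterpart of Proposition \ref{CLT_Re_Im_vect}. I would then expand each estimator to first order about $\phi(\tilde{\mathbf{t}})$, using the modulus-derivative identity
\begin{equation*}
|\hat\phi_n(\tilde{\mathbf{t}}_k)|-|\phi(\tilde{\mathbf{t}}_k)| = \operatorname{Re}\!\Big[\tfrac{\overline{\phi(\tilde{\mathbf{t}}_k)}}{|\phi(\tilde{\mathbf{t}}_k)|}\big(\hat\phi_n(\tilde{\mathbf{t}}_k)-\phi(\tilde{\mathbf{t}}_k)\big)\Big]+o_P(n^{-1/2}),
\end{equation*}
which writes every modulus fluctuation as a real linear functional of the complex fluctuation. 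Collecting the resulting coefficients gives precisely the radial derivative factors recorded in $\tilde{\mathbf{G}}$: the block $(\boldsymbol{A}_s^1,\boldsymbol{A}_s^2)$ from differentiating $\hat\alpha^{mult}$ through (\ref{def:alpha_s})--(\ref{def:alpha_m}), the blocks $\boldsymbol{B}_1,\boldsymbol{B}_2$ from the diagonal estimator in (\ref{eqn:matrix_estims}) with its $\hat\alpha$-plug-in contribution, and $\boldsymbol{J}_1,\boldsymbol{J}_2,\boldsymbol{J}_+,\boldsymbol{J}_-$ from the off-diagonal estimator. The delta method then yields the stated normal limit with covariance $\tilde{\mathbf{G}}^\top\tilde{\mathbf{\Omega}}\tilde{\mathbf{G}}$.

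The main obstacle is the consistency check that this compact factorization reproduces the covariance of Theorem \ref{thm:conf_interv}, i.e.\ $\tilde{\mathbf{G}}^\top\tilde{\mathbf{\Omega}}\tilde{\mathbf{G}}=\mathbf{G}^\top\mathbf{\Omega}\mathbf{G}$. Here the phase factors $\boldsymbol{R}_{\phi}=\operatorname{Re}(\phi)/|\phi|$ and $\boldsymbol{Im}_{\phi}=\operatorname{Im}(\phi)/|\phi|$, which appear explicitly inside $\mathbf{G}$, have been moved out of the Jacobian and must instead be absorbed by the covariance representation; the grid $\tilde{\mathbf{t}}$ of (\ref{def:t_tilde}) is chosen so that this bookkeeping closes. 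Concretely I would identify $\mathbf{\Omega}$ as the real $2m\times 2m$ covariance of $(\operatorname{Re}w,\operatorname{Im}w)$ and verify the algebraic identity relating the quadratic form $[\boldsymbol{R}_{\phi},\boldsymbol{Im}_{\phi}]\,\mathbf{\Omega}\,[\boldsymbol{R}_{\phi},\boldsymbol{Im}_{\phi}]^\top$ in the real representation to the entries of $\tilde{\mathbf{\Omega}}$ in the complex representation, treating separately the diagonal ($k=k'$) and off-diagonal ($k\neq k'$) grid pairs. This matching of the two representations, rather than any new probabilistic input, is the delicate part; once it is settled the corollary follows from the theorem together with the continuous-mapping and Slutsky arguments already used above.
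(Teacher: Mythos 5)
Your strategy is the paper's own: the paper proves this corollary in a single line by referring back to the proof of Theorem \ref{thm:conf_interv}, where every entry of $\mathbf{G}$ is computed by the chain rule as a derivative with respect to a modulus $|\hat\phi_n(\tilde{\mathbf{t}}_k)|$ multiplied by a phase factor $\operatorname{Re}[\phi(\tilde{\mathbf{t}}_k)]/|\phi(\tilde{\mathbf{t}}_k)|$ or $\operatorname{Im}[\phi(\tilde{\mathbf{t}}_k)]/|\phi(\tilde{\mathbf{t}}_k)|$ (compare $\boldsymbol{B}^r_1=\mathbf{b}_1\odot\boldsymbol{R}^1_\phi$ in the theorem with $\boldsymbol{B}_1=\mathbf{b}_1$ here). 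Dropping the phase factors from the Jacobian to form $\tilde{\mathbf{G}}$ and absorbing them into the covariance is precisely your regrouping, and your modulus-derivative identity is the correct analytic expression of that step.

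The one place where your plan, taken literally, would break is the final ``matching'' step. If $\tilde{\mathbf{\Omega}}$ is read as the Hermitian covariance of the complex vector $w$, then $\tilde{\mathbf{G}}^\top\tilde{\mathbf{\Omega}}\tilde{\mathbf{G}}$ need not equal $\mathbf{G}^\top\mathbf{\Omega}\mathbf{G}$ (it need not even be a real matrix): since each $|w_k|=1$, $w$ is far from circularly symmetric, so its real second-order structure is not determined by the Hermitian covariance alone but also requires the relation (pseudo-covariance) matrix. Concretely, with $u_k=\phi(\tilde{\mathbf{t}}_k)/|\phi(\tilde{\mathbf{t}}_k)|$ and $z_k=w_k-\mathbb{E}w_k$, your expansion gives the absorbed covariance entries as $\operatorname{Cov}\bigl(\operatorname{Re}[\bar u_j z_j],\operatorname{Re}[\bar u_k z_k]\bigr)=\tfrac12\operatorname{Re}\bigl[\bar u_j u_k\,\mathbb{E}(z_j\bar z_k)+\bar u_j\bar u_k\,\mathbb{E}(z_j z_k)\bigr]$, and the term $\mathbb{E}(z_j z_k)$ cannot be recovered from $\mathbb{E}(z_j\bar z_k)$. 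So the corollary holds with $\tilde{\mathbf{\Omega}}$ defined as the covariance matrix of the real projections $\operatorname{Re}[\overline{\phi(\tilde{\mathbf{t}}_k)}\,w_k]/|\phi(\tilde{\mathbf{t}}_k)|$ --- which is exactly what your delta-method expansion produces --- and your write-up should adopt that definition explicitly (the paper sidesteps the point by declining to specify the exact form of $\tilde{\mathbf{\Omega}}$), rather than promise an identity with the complex covariance of $w$ that does not hold in general.
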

\begin{proof}
	The corollary follows from the proof of Theorem (\ref{thm:conf_interv}).
\end{proof}
We do not investigate the exact form of $\tilde{\mathbf{\Omega}}$ and leave it for future research.

\section{Numerical study}
This section consists of Monte-Carlo experiments showing performance of the estimators discussed in the previous sections. The setup of every experiment is the same: 2000 samples of 100, 1000 or 10 000 3-dimensional iid S$\alpha$S variables are generated for each alpha from the set $\{0.5, 1.0, 1.5\}$.

\subsection{Performance of different alpha-estimators}
In this part, numerical performance of 3 estimators is explored: the original estimator by S.J. Press (\ref{eqn:alpha_hat}), the estimator based on only 1 component of the sub-Gaussian vector (\ref{def:alpha_s}) and the last one (\ref{def:alpha_m}).

In the following experiments we illustrate issues with alpha estimators. As we mentioned earlier, the problem of $\hat \alpha^p$ is that it basically aggregates all components of $\mathbf{x}$ into one statistic with potentially large deviations. Analogously, $\hat \alpha_s$ exploits only 1 component of the vector which might have a wide distribution.

We also demonstrate that $\hat \alpha^{mult}$ shows better performance and explain it by its internal structure. 

\paragraph{Covariance matrix with 3 equally dispersed independent components}

\begin{align}
\boldsymbol\Sigma = \begin{pmatrix} 
0.1 & 0 & 0 \\
0 & 0.1 & 0 \\
0 & 0 & 0.1 \\
\end{pmatrix}
\label{Num:Sigma_diagonal}
\end{align}
\begin{table}[H]
	\centering
	\begin{tabular}{|r|r|r|r|r|r|r|r|}
		\hline
		sample\_size & alpha & alpha\_p\_b & alpha\_s\_b & alpha\_m\_b & alpha\_p\_rm & alpha\_s\_rm & alpha\_m\_rm\\
		\hline
		100 & 0.5 & -0.0011 & 0.0032 & 0.0005 & 0.2532 & 0.2322 & 0.1422\\
		
		1000 & 0.5 & -0.0036 & -0.0009 & 0.0010 & 0.0791 & 0.0718 & 0.0453\\
		
		10000 & 0.5 & -0.0003 & 0.0000 & -0.0001 & 0.0248 & 0.0227 & 0.0141\\
		\hline
		100 & 1.0 & -0.0380 & 0.0018 & 0.0037 & 0.2769 & 0.2255 & 0.1447\\
		
		1000 & 1.0 & -0.0032 & -0.0013 & -0.0002 & 0.0915 & 0.0698 & 0.0452\\
		
		10000 & 1.0 & -0.0002 & -0.0011 & -0.0006 & 0.0280 & 0.0220 & 0.0142\\
		\hline
		100 & 1.5 & -0.1470 & 0.0062 & 0.0024 & 0.3226 & 0.1960 & 0.1278\\
		
		1000 & 1.5 & -0.0087 & 0.0021 & 0.0004 & 0.1310 & 0.0624 & 0.0415\\
		
		10000 & 1.5 & -0.0007 & 0.0001 & 0.0000 & 0.0413 & 0.0201 & 0.0130\\
		\hline
	\end{tabular}
	\caption{Biases and RMSE's of $\hat \alpha^p$, $\hat \alpha_s$ and $\hat \alpha^{mult}$ for different values of alpha and the sample size. $\boldsymbol\Sigma$ is the one in (\ref{Num:Sigma_diagonal}).}
\end{table}

\paragraph{Covariance matrix with 3 equally dispersed inter-dependent components}

\begin{align}
\boldsymbol\Sigma = \begin{pmatrix} 
0.10 & 0.04 & 0.01 \\
0.04 & 0.10 & 0.02 \\
0.01 & 0.02 & 0.10 \\
\end{pmatrix}
\label{Num:Sigma_common}
\end{align}
\begin{table}[H]
	\centering
	\begin{tabular}{|r|r|r|r|r|r|r|r|}
		\hline
		sample\_size & alpha & alpha\_p\_b & alpha\_s\_b & alpha\_m\_b & alpha\_p\_rm & alpha\_s\_rm & alpha\_m\_rm\\
		\hline
		100 & 0.5 & -0.0038 & 0.0027 & -0.0012 & 0.2633 & 0.2395 & 0.1453\\
		
		1000 & 0.5 & 0.0007 & 0.0007 & -0.0006 & 0.0821 & 0.0716 & 0.0443\\
		
		10000 & 0.5 & 0.0005 & 0.0009 & 0.0003 & 0.0261 & 0.0227 & 0.0139\\
		\hline
		100 & 1.0 & -0.0684 & 0.0086 & 0.0023 & 0.3079 & 0.2255 & 0.1436\\
		
		1000 & 1.0 & -0.0081 & -0.0001 & 0.0009 & 0.1098 & 0.0693 & 0.0449\\
		
		10000 & 1.0 & -0.0021 & 0.0003 & 0.0001 & 0.0358 & 0.0224 & 0.0143\\
		\hline
		100 & 1.5 & -0.3924 & -0.0023 & -0.0006 & 0.4981 & 0.1979 & 0.1305\\
		
		1000 & 1.5 & -0.0655 & 0.0014 & 0.0009 & 0.1940 & 0.0628 & 0.0418\\
		
		10000 & 1.5 & -0.0033 & 0.0006 & 0.0005 & 0.0797 & 0.0192 & 0.0127\\
		\hline
	\end{tabular}
	\caption{Biases and RMSE's of $\hat \alpha^p$, $\hat \alpha_s$ and $\hat \alpha^{mult}$ for different values of alpha and the sample size. $\boldsymbol\Sigma$ is the one in (\ref{Num:Sigma_common}).}
	\label{Num:Results_common}
\end{table}

In these two experiments $\hat \alpha^p$ shows the worst performance. Performance of $\hat \alpha_s$ is between those of $\hat \alpha^p$ and $\hat \alpha^{mult}$ even though all variances in $\Sigma$ are the same. $\hat \alpha^{mult}$ is the most precise estimator probably due the fact that $\hat \alpha^{mult}$ uses more data of "the same variation".

\paragraph{Covariance matrix with 1 outstandingly dispersed component}
\begin{align}
\label{Num:Sigma_common_outsand_el}
\boldsymbol\Sigma = \begin{pmatrix} 
1.0 & 0.04 & 0.01 \\
0.04 & 0.10 & 0.02 \\
0.01 & 0.02 & 0.10 \\
\end{pmatrix}
\end{align}

\begin{table}[H]
	\centering
	\begin{tabular}{|r|r|r|r|r|r|r|r|}
		\hline
		sample\_size & alpha & alpha\_p\_b & alpha\_s\_b & alpha\_m\_b & alpha\_p\_rm & alpha\_s\_rm & alpha\_m\_rm\\
		\hline
		100 & 0.5 & -0.0321 & -0.0225 & -0.0046 & 0.3117 & 0.2991 & 0.1542\\
		
		1000 & 0.5 & 0.0045 & 0.0003 & 0.0003 & 0.1035 & 0.0955 & 0.0464\\
		
		10000 & 0.5 & 0.0000 & 0.0012 & -0.0001 & 0.0327 & 0.0310 & 0.0158\\
		\hline
		100 & 1.0 & -0.5070 & -0.3510 & -0.1150 & 0.6103 & 0.4847 & 0.1962\\
		
		1000 & 1.0 & -0.1591 & -0.0624 & -0.0213 & 0.2493 & 0.1929 & 0.0738\\
		
		10000 & 1.0 & -0.0067 & -0.0017 & -0.0005 & 0.1122 & 0.0759 & 0.0274\\
		\hline
		100 & 1.5 & -1.2390 & -1.0204 & -0.3375 & 1.2925 & 1.0752 & 0.3712\\
		
		1000 & 1.5 & -0.8832 & -0.6333 & -0.2120 & 0.9058 & 0.6612 & 0.2235\\
		
		10000 & 1.5 & -0.5792 & -0.3508 & -0.1170 & 0.5965 & 0.3762 & 0.1258\\
		\hline
	\end{tabular}
	\caption{Biases and RMSE's of $\hat \alpha^p$, $\hat \alpha_s$ and $\hat \alpha^{mult}$ for different values of alpha and the sample size. $\boldsymbol\Sigma$ is the one in (\ref{Num:Sigma_common_outsand_el})}
\end{table}
Comparing these results with the ones in Table \ref{Num:Results_common}, where there is no extremely large component, it is seen that both $\hat\alpha^p$ and $\hat\alpha_s$ got increases in their bias and root mean square error. Performance of $\hat\alpha^p$ here is comparable to that of $\hat\alpha_s$ because of the dominant component $\boldsymbol\Sigma[1,1]$ influencing the empirical characteristic function in (\ref{eqn:alpha_hat}). At the same time,  performance of $\hat\alpha^{mult}$ decreased less, which suggests that $\hat\alpha^{mult}$ is more robust to highly influential components of $\boldsymbol\Sigma$ due to component-wise estimation of $\alpha$.

\subsubsection{Estimation of $\alpha$ when elements of $\Sigma$ are not within [0.1 -- 1].}

\paragraph{Covariance matrix with more dispersed inter-dependent components}
\begin{align}
%\label{Omega_repres}
\boldsymbol\Sigma = \begin{pmatrix} 
10 & 4 & 1 \\
4 & 10 & 2 \\
1 & 2 & 10 \\
\end{pmatrix}
\label{Num:Sigma_large}
\end{align}
\begin{table}[H]
	\centering
	\begin{tabular}{|r|r|r|r|r|r|r|r|}
		\hline
		sample\_size & alpha & alpha\_p\_b & alpha\_s\_b & alpha\_m\_b & alpha\_p\_rm & alpha\_s\_rm & alpha\_m\_rm\\
		\hline
		100 & 0.5 & -0.445 & -0.280 & -0.280 & 0.579 & 0.471 & 0.358\\
		
		1000 & 0.5 & -0.279 & -0.036 & -0.036 & 0.370 & 0.195 & 0.117\\
		
		10000 & 0.5 & -0.063 & 0.002 & -0.001 & 0.155 & 0.073 & 0.042\\
		\hline
		100 & 1.0 & -0.991 & -0.995 & -0.997 & 1.055 & 1.061 & 1.018\\
		
		1000 & 1.0 & -1.007 & -0.978 & -0.979 & 1.039 & 1.009 & 0.990\\
		
		10000 & 1.0 & -0.998 & -0.890 & -0.891 & 1.016 & 0.909 & 0.898\\
		\hline
		100 & 1.5 & -1.504 & -1.509 & -1.504 & 1.547 & 1.551 & 1.519\\
		
		1000 & 1.5 & -1.499 & -1.499 & -1.497 & 1.520 & 1.520 & 1.505\\
		
		10000 & 1.5 & -1.499 & -1.506 & -1.500 & 1.513 & 1.518 & 1.504\\
		\hline
	\end{tabular}
	\caption{Biases and RMSE's of $\hat \alpha^p$, $\hat \alpha_s$ and $\hat \alpha^{mult}$ for different values of alpha and the sample size. $\boldsymbol\Sigma$ is the one in (\ref{Num:Sigma_large})}
\end{table}
As in other stable models, deviations from 0.1-1 by order of magnitude in the scaling parameter tend to decrease accuracy of estimation. All 3 $\alpha$ estimators lost in quality of performance. In the following table the estimators perform reasonably well only in the case $\alpha=0.5$. In cases of other two alphas the errors are roughly 100\% of the parameter values.

\paragraph{Covariance matrix with less dispersed inter-dependent components}
\begin{align}
\boldsymbol\Sigma = \begin{pmatrix} 
1 & 0.4 & 0.1 \\
0.4 & 1 & 0.2 \\
0.1 & 0.2 & 1 \\
\end{pmatrix} \cdot 10^{-3}
\label{Num:Sigma_small}
\end{align}
\begin{table}[H]
	\centering
	\begin{tabular}{|r|r|r|r|r|r|r|r|}
		\hline
		sample\_size & alpha & alpha\_p\_b & alpha\_s\_b & alpha\_m\_b & alpha\_p\_rm & alpha\_s\_rm & alpha\_m\_rm\\
		\hline
		100 & 0.5 & 0.011 & 0.022 & 0.015 & 0.243 & 0.271 & 0.174\\
		
		1000 & 0.5 & 0.004 & 0.002 & 0.003 & 0.075 & 0.082 & 0.052\\
		
		10000 & 0.5 & 0.001 & 0.001 & 0.000 & 0.024 & 0.026 & 0.017\\
		\hline
		100 & 1.0 & 0.038 & 0.091 & 0.086 & 0.315 & 0.444 & 0.340\\
		
		1000 & 1.0 & 0.004 & 0.008 & 0.009 & 0.097 & 0.133 & 0.098\\
		
		10000 & 1.0 & 0.001 & -0.002 & 0.000 & 0.030 & 0.042 & 0.031\\
		\hline
		100 & 1.5 & 0.073 & 0.198 & 0.184 & 0.369 & 0.502 & 0.416\\
		
		1000 & 1.5 & 0.009 & 0.031 & 0.028 & 0.123 & 0.216 & 0.164\\
		
		10000 & 1.5 & 0.001 & 0.002 & 0.003 & 0.040 & 0.068 & 0.052\\
		\hline
	\end{tabular}
	\caption{Biases and RMSE's of $\hat \alpha^p$, $\hat \alpha_s$ and $\hat \alpha^{mult}$ for different values of alpha and the sample size. $\boldsymbol\Sigma$ is the one in (\ref{Num:Sigma_small})}
\end{table}

Expectedly, in this case $\hat\alpha_s$, and $\hat\alpha^{mult}$ also lost in accuracy, though this is not the case for $\hat\alpha^p$. The reason why $\hat\alpha^p$ improved is not clear.

In all the above experiments $\hat\alpha^{mult}$ is the best estimator out of the three.

\subsection{Sigma estimators}
To demonstrate performance of the diagonal and the non-diagonal Sigma estimators we use the same $\boldsymbol\Sigma$ matrix as in (\ref{Num:Sigma_common}):
\begin{align}
\boldsymbol\Sigma = \begin{pmatrix} 
0.10 & 0.04 & 0.01 \\
0.04 & 0.10 & 0.02 \\
0.01 & 0.02 & 0.10 \\
\end{pmatrix}
\label{Num:Sigma_common_rep}
\end{align}
\begin{table}[h!]
	\centering
	\begin{tabular}{|r|r|r|r|r|r|r|r|}
		\hline
		sample\_size & alpha & Sigma11\_b & Sigma22\_b & Sigma33\_b & Sigma11\_rm & Sigma22\_rm & Sigma33\_rm\\
		\hline
		100 & 0.5 & 0.0213 & 0.0291 & 0.0292 & 0.1156 & 0.1761 & 0.1868\\
		
		1000 & 0.5 & 0.0011 & 0.0020 & 0.0022 & 0.0234 & 0.0236 & 0.0235\\
		
		10000 & 0.5 & 0.0001 & 0.0000 & 0.0000 & 0.0072 & 0.0073 & 0.0069\\
		\hline
		100 & 1.0 & 0.0024 & 0.0019 & 0.0027 & 0.0367 & 0.0357 & 0.0368\\
		
		1000 & 1.0 & 0.0003 & 0.0002 & 0.0001 & 0.0114 & 0.0111 & 0.0108\\
		
		10000 & 1.0 & -0.0001 & 0.0000 & -0.0001 & 0.0035 & 0.0034 & 0.0034\\
		\hline
		100 & 1.5 & -0.0003 & -0.0004 & -0.0008 & 0.0220 & 0.0218 & 0.0229\\
		
		1000 & 1.5 & -0.0002 & -0.0001 & -0.0002 & 0.0067 & 0.0068 & 0.0069\\
		
		10000 & 1.5 & 0.0000 & 0.0000 & 0.0000 & 0.0022 & 0.0022 & 0.0021\\
		\hline
	\end{tabular}
	\caption{Biases and RMSE's of $\hat{\boldsymbol\Sigma}_d$ for different values of alpha and the sample size. $\boldsymbol\Sigma$ is the one in (\ref{Num:Sigma_common_rep})}
\end{table}
\begin{table}[h!]
	\centering
	\begin{tabular}{|r|r|r|r|r|r|r|r|}
		\hline
		sample\_size & alpha & Sigma21\_b & Sigma31\_b & Sigma32\_b & Sigma21\_rm & Sigma31\_rm & Sigma32\_rm\\
		\hline
		100 & 0.5 & 6e-03 & 0.0019 & 0.0032 & 0.0670 & 0.0610 & 0.0612\\
		
		1000 & 0.5 & 7e-04 & 0.0000 & 0.0002 & 0.0162 & 0.0136 & 0.0141\\
		
		10000 & 0.5 & 1e-04 & 0.0003 & 0.0002 & 0.0049 & 0.0042 & 0.0042\\
		\hline
		100 & 1.0 & 2e-04 & -0.0003 & 0.0005 & 0.0251 & 0.0232 & 0.0233\\
		
		1000 & 1.0 & 0e+00 & 0.0001 & 0.0000 & 0.0078 & 0.0070 & 0.0072\\
		
		10000 & 1.0 & 0e+00 & 0.0000 & 0.0001 & 0.0024 & 0.0022 & 0.0022\\
		\hline
		100 & 1.5 & -2e-04 & 0.0003 & 0.0001 & 0.0165 & 0.0160 & 0.0158\\
		
		1000 & 1.5 & 1e-04 & 0.0002 & 0.0001 & 0.0054 & 0.0051 & 0.0050\\
		
		10000 & 1.5 & 0e+00 & 0.0000 & 0.0000 & 0.0017 & 0.0016 & 0.0016\\
		\hline
	\end{tabular}
	\caption{Biases and RMSE's of $\hat{\boldsymbol\Sigma}_{nd}$ for different values of alpha and the sample size. $\boldsymbol\Sigma$ is the one in (\ref{Num:Sigma_common_rep})}
\end{table}

We observe that both estimators are unbiased. Their precision becomes reasonably good when the sample size is greater than 1000. Note that although $\boldsymbol\Sigma_{21}$ is 4 times greater than $\boldsymbol\Sigma_{31}$, the precisions of their estimates are very close to each other.

\section{Proofs}
\label{sec:Proofs}
\begin{proof}[Proof of Theorem \ref{CLT_Re_Im_vect}]
	(\ref{feuer_CLT_parts_ecf}) follows directly from the classical CLT, so the rest of the proof is devoted to obtaining $\mathbf{\Omega}$. 
    Let's temporally use the following notation: $Y(t):=e^{i\mathbf{t}^T\mathbf{X}}$, and establish a useful identity.
	\begin{equation}
	\begin{split}
		\mbox{Cov} \big[ Y(\mathbf{t}), Y(\boldsymbol{\tau}) \big] = \mbox{Cov} \big[ e^{i\mathbf{t}^T\mathbf{X}}, e^{i\boldsymbol\tau^T \mathbf{X}} \big] = \\
		\mathbb{E} \left( e^{i\mathbf{t}^T\mathbf{X}} \cdot  e^{i\boldsymbol\tau^T \mathbf{X}} \right) - \mathbb{E} \left( e^{i\mathbf{t}^T\mathbf{X}}\right) \cdot \mathbb{E} \left( e^{i\boldsymbol\tau^T \mathbf{X}}\right) = \phi(\mathbf{t}+\boldsymbol\tau) - \phi(\mathbf{t}) \cdot \phi(\boldsymbol\tau)
	\end{split}		
	\end{equation}	
	Then, using $\mbox{Cov} \big[ Y(\mathbf{t}), Y(\boldsymbol\tau) \big]$, we calculate all the elements of $\mathbf{\Omega}$:
    \begin{equation}
    \begin{split}
        \mbox{Cov} \left[ \operatorname{Re}(e^{i\mathbf{t}^T_j\mathbf{X}}), \operatorname{Im}(e^{i\mathbf{t}^T_l\mathbf{X}}) \right] = \mbox{Cov} \left( \frac{1}{2}\big[Y(\mathbf{t}_j) + Y(-\mathbf{t}_j)\big], \frac{1}{2}\big[Y(\mathbf{t}_l) - Y(-\mathbf{t}_l)\big] \right)= \\
        \frac{1}{4} \mbox{Cov} \big[Y(\mathbf{t}_j), Y(\mathbf{t}_l)\big] + \frac{1}{4} \mbox{Cov} \big[Y(-\mathbf{t}_j), Y(\mathbf{t}_l)\big] -
        \frac{1}{4} \mbox{Cov} \big[Y(\mathbf{t}_j), Y(-\mathbf{t}_l)\big] -
        \frac{1}{4} \mbox{Cov} \big[Y(-\mathbf{t}_j), Y(-\mathbf{t}_l)\big] = \\
        \frac{1}{4} \big[\phi(\mathbf{t}_j+\mathbf{t}_l) - \phi(\mathbf{t}_j) \phi(\mathbf{t}_l)\big] +
        \frac{1}{4} \big[\phi(-\mathbf{t}_j+\mathbf{t}_l) - \phi(-\mathbf{t}_j) \phi(\mathbf{t}_l) \big] - \\
        \frac{1}{4} \big[\phi(\mathbf{t}_j-\mathbf{t}_l) - \phi(\mathbf{t}_j) \phi(-\mathbf{t}_l)\big] -
        \frac{1}{4} \big[\phi(-\mathbf{t}_j-\mathbf{t}_l) - \phi(-\mathbf{t}_j) \phi(-\mathbf{t}_l)\big] = \\
        \frac{1}{2} \cdot \operatorname{Im}(\phi(\mathbf{t}_j+\mathbf{t}_l)) + \frac{1}{2} \cdot \operatorname{Im}(\phi(-\mathbf{t}_j+\mathbf{t}_l)) -
        \frac{1}{2} \operatorname{Im}[\phi(\mathbf{t}_j)\phi(\mathbf{t}_l)] +
        \frac{1}{2} \operatorname{Im}[\phi(\mathbf{t}_j)\phi(-\mathbf{t}_l)] = \\
        \frac{1}{2} \cdot \operatorname{Im}(\phi(\mathbf{t}_j+\mathbf{t}_l)) + \frac{1}{2} \cdot \operatorname{Im}(\phi(-\mathbf{t}_j+\mathbf{t}_l)) -
        \frac{1}{2} \operatorname{Im}\big[\phi(\mathbf{t}_j)\phi(\mathbf{t}_l) + \phi(\mathbf{t}_j)\phi(-\mathbf{t}_l)\big] = \\
        \frac{1}{2} \cdot \operatorname{Im}(\phi(\mathbf{t}_j+\mathbf{t}_l)) + \frac{1}{2} \cdot \operatorname{Im}(\phi(-\mathbf{t}_j+\mathbf{t}_l)) -
        \frac{1}{2} \operatorname{Im}\big[\phi(\mathbf{t}_j) \cdot 2\operatorname{Re} \{\phi(\mathbf{t}_l)\}\big] = \\
        \frac{1}{2} \cdot \operatorname{Im}(\phi(\mathbf{t}_j+\mathbf{t}_l)) + \frac{1}{2} \cdot \operatorname{Im}(\phi(-\mathbf{t}_j+\mathbf{t}_l)) -
        \operatorname{Im}[\phi(\mathbf{t}_j)] \cdot \operatorname{Re} \{\phi(\mathbf{t}_l)\}
    \end{split}		
    \end{equation}  

    \begin{equation}
    \begin{split}
        \operatorname{Cov} \left[ \operatorname{Re}(e^{i\mathbf{t}^T_j\mathbf{X}}), \operatorname{Re}(e^{i\mathbf{t}^T_l\mathbf{X}}) \right] = \operatorname{Cov} \left( \frac{1}{2}\big[Y(\mathbf{t}_j) + Y(-\mathbf{t}_j)\big], \frac{1}{2}\big[Y(\mathbf{t}_l) + Y(-\mathbf{t}_l)\big] \right)= \\
        \frac{1}{4} \operatorname{Cov} \big[Y(\mathbf{t}_j), Y(\mathbf{t}_l)\big] + \frac{1}{4} \operatorname{Cov} \big[Y(-\mathbf{t}_j), Y(\mathbf{t}_l)\big] +
        \frac{1}{4} \operatorname{Cov} \big[Y(\mathbf{t}_j), Y(-\mathbf{t}_l)\big] +
        \frac{1}{4} \operatorname{Cov} \big[Y(-\mathbf{t}_j), Y(-\mathbf{t}_l)\big] = \\
        \frac{1}{4} \left[\phi(\mathbf{t}_j+\mathbf{t}_l) - \phi(\mathbf{t}_j) \phi(\mathbf{t}_l)\right] +
        \frac{1}{4} \left[\phi(-\mathbf{t}_j+\mathbf{t}_l) - \phi(-\mathbf{t}_j) \phi(\mathbf{t}_l) \right] + \\
        \frac{1}{4} \left[\phi(\mathbf{t}_j-\mathbf{t}_l) - \phi(\mathbf{t}_j) \phi(-\mathbf{t}_l)\right] +
        \frac{1}{4} \left[\phi(-\mathbf{t}_j-\mathbf{t}_l) - \phi(-\mathbf{t}_j) \phi(-\mathbf{t}_l)\right] = \\
        \frac{1}{2} \cdot \operatorname{Re}(\phi(\mathbf{t}_j+\mathbf{t}_l)) + \frac{1}{2} \cdot \operatorname{Re}(\phi(-\mathbf{t}_j+\mathbf{t}_l)) -
        \frac{1}{2} \operatorname{Re}[\phi(\mathbf{t}_j)\phi(\mathbf{t}_l)] -
        \frac{1}{2} \operatorname{Re}[\phi(-\mathbf{t}_j)\phi(\mathbf{t}_l)] = \\
        \frac{1}{2} \cdot \operatorname{Re}(\phi(\mathbf{t}_j+\mathbf{t}_l)) + \frac{1}{2} \cdot \operatorname{Re}(\phi(-\mathbf{t}_j+\mathbf{t}_l)) -
        \frac{1}{2} \operatorname{Re}[\phi(\mathbf{t}_j)\phi(\mathbf{t}_l) + \phi(-\mathbf{t}_j)\phi(\mathbf{t}_l)] = \\
        \frac{1}{2} \cdot \operatorname{Re}(\phi(\mathbf{t}_j+\mathbf{t}_l)) + \frac{1}{2} \cdot \operatorname{Re}(\phi(-\mathbf{t}_j+\mathbf{t}_l)) -
        \frac{1}{2} \operatorname{Re}[\phi(\mathbf{t}_l)\cdot 2 \operatorname{Re} \{\phi(\mathbf{t}_j)\}] =  \\
        \frac{1}{2} \cdot \operatorname{Re}(\phi(\mathbf{t}_j+\mathbf{t}_l)) + \frac{1}{2} \cdot \operatorname{Re}(\phi(-\mathbf{t}_j+\mathbf{t}_l)) -
        \operatorname{Re}[\phi(\mathbf{t}_l)] \cdot  \operatorname{Re} \{\phi(\mathbf{t}_j)\}    
    \end{split}		
    \end{equation}    
    
    \begin{equation}
    \begin{split}
        \operatorname{Cov} \left[ \operatorname{Im}(e^{i\mathbf{t}^T_j\mathbf{X}}), \operatorname{Im}(e^{i\mathbf{t}^T_l\mathbf{X}}) \right] = \operatorname{Cov} \left( \frac{1}{2}\big[Y(\mathbf{t}_j) - Y(-\mathbf{t}_j)\big], \frac{1}{2}\big[Y(\mathbf{t}_l) - Y(-\mathbf{t}_l)\big] \right)= \\
        \frac{1}{4} \operatorname{Cov} \big[Y(\mathbf{t}_j), Y(\mathbf{t}_l)\big] - \frac{1}{4} \operatorname{Cov} \big[Y(-\mathbf{t}_j), Y(\mathbf{t}_l)\big] -
        \frac{1}{4} \operatorname{Cov} \big[Y(\mathbf{t}_j), Y(-\mathbf{t}_l)\big] +
        \frac{1}{4} \operatorname{Cov} \big[Y(-\mathbf{t}_j), Y(-\mathbf{t}_l)\big] = \\
        \frac{1}{4} \left[\phi(\mathbf{t}_j+\mathbf{t}_l) - \phi(\mathbf{t}_j) \phi(\mathbf{t}_l)\right] -
        \frac{1}{4} \left[\phi(-\mathbf{t}_j+\mathbf{t}_l) - \phi(-\mathbf{t}_j) \phi(\mathbf{t}_l) \right] - \\
        \frac{1}{4} \left[\phi(\mathbf{t}_j-\mathbf{t}_l) - \phi(\mathbf{t}_j) \phi(-\mathbf{t}_l)\right] +
        \frac{1}{4} \left[\phi(-\mathbf{t}_j-\mathbf{t}_l) - \phi(-\mathbf{t}_j) \phi(-\mathbf{t}_l)\right] = \\
        \frac{1}{2} \cdot \operatorname{Re}(\phi(\mathbf{t}_j+\mathbf{t}_l)) - \frac{1}{2} \cdot \operatorname{Re}(\phi(-\mathbf{t}_j+\mathbf{t}_l)) -
        \frac{1}{2} \operatorname{Re}[\phi(\mathbf{t}_j)\phi(\mathbf{t}_l)] +
        \frac{1}{2} \operatorname{Re}[\phi(\mathbf{t}_j)\phi(-\mathbf{t}_l)] = \\
        \frac{1}{2} \cdot \operatorname{Re}(\phi(\mathbf{t}_j+\mathbf{t}_l)) - \frac{1}{2} \cdot \operatorname{Re}(\phi(-\mathbf{t}_j+\mathbf{t}_l)) -
        \frac{1}{2} \operatorname{Re}\big[\phi(\mathbf{t}_j)\phi(\mathbf{t}_l) - \phi(\mathbf{t}_j)\phi(-\mathbf{t}_l)\big] = \\
        \frac{1}{2} \cdot \operatorname{Re}(\phi(\mathbf{t}_j+\mathbf{t}_l)) - \frac{1}{2} \cdot \operatorname{Re}(\phi(-\mathbf{t}_j+\mathbf{t}_l)) -
        \frac{1}{2} \operatorname{Re}\big[\phi(\mathbf{t}_j) \cdot 2 \operatorname{Im} \{\phi(\mathbf{t}_l)\}\big] = \\
        \frac{1}{2} \cdot \operatorname{Re}(\phi(\mathbf{t}_j+\mathbf{t}_l)) - \frac{1}{2} \cdot \operatorname{Re}(\phi(-\mathbf{t}_j+\mathbf{t}_l)) -
        \operatorname{Re}[\phi(\mathbf{t}_j)]  \cdot  \operatorname{Im} \{\phi(\mathbf{t}_l)\}     
    \end{split}		
    \end{equation}        
\end{proof}

\begin{proof}[Proof of Theorem \ref{thm:conf_interv}]		
	\item 
	\paragraph{The multidimensional delta method.}
	We start by defining four matrices:
	\begin{align}
	\begin{split}
	\mathbf{t}^e_j := (s_j \mathbf{e}_1, \dots, s_j \mathbf{e}_p), j=\{1,2\}	\qquad	
	\mathbf{r}^+: \mathbf{r}^+[k,\#(i,j)] :=&  \mathbf{e}_i[k] + \mathbf{e}_j[k], \\
	\mathbf{r}^-: \mathbf{r}^-[k,\#(i,j)] :=&  \mathbf{e}_i[k] - \mathbf{e}_j[k], \\
	\text{ for all } k=1, \dots, p; \text{ } &i=2, \dots, p \text{ and } j=1, \dots, i-1
	\end{split}
	\end{align}
	Both $\mathbf{r}^+$ and $\mathbf{r}^-$ have dimensions $p \times \frac{p(p-1)}{2}$, while those of vectors $\mathbf{t}^e_j$ are $p \times p$. These four matrices are used to compose $p \times (p^2+p)$-dimensional matrix $\tilde {\mathbf{t}}$:
	\begin{equation}
	\label{def:t_tilde}
	\tilde{\mathbf{t}} := (\mathbf{t}^e_1, \mathbf{t}^e_2, \mathbf{r}^+, \mathbf{r}^-)
	\end{equation}
	We use $\tilde{\mathbf{t}}$ to construct a statistic $\hat {\boldsymbol{\theta}}_n$ for which CLT (\ref{feuer_CLT_parts_ecf}) was shown. Specifically, we extract empirical characteristic functions using its columns:
	\begin{align}
	%\label{feuer_CLT_parts_ecf}
	\hat {\boldsymbol{\theta}}_n = \begin{pmatrix} 
	\operatorname{Re}[\hat{\phi}_n(\tilde{\mathbf{t}}_1)] \\
	\operatorname{Re}[\hat{\phi}_n(\tilde{\mathbf{t}}_2)] \\
	\vdots \\
	\operatorname{Re}[\hat{\phi}_n(\tilde{\mathbf{t}}_{p^2+p})]\\
	\operatorname{Im}[\hat{\phi}_n(\tilde{\mathbf{t}}_1)] \\
	\operatorname{Im}[\hat{\phi}_n(\tilde{\mathbf{t}}_2)] \\
	\vdots \\
	\operatorname{Im}[\hat{\phi}_n(\tilde{\mathbf{t}}_{p^2+p})]\\
	\end{pmatrix} \quad
	{\boldsymbol{\theta}}_0 = \begin{pmatrix} 
	\operatorname{Re}[\phi(\tilde{\mathbf{t}}_1)] \\
	\operatorname{Re}[\phi(\tilde{\mathbf{t}}_2)] \\
	\vdots \\
	\operatorname{Re}[\phi(\tilde{\mathbf{t}}_{p^2+p})] \\
	\operatorname{Im}[\phi(\tilde{\mathbf{t}}_1)] \\
	\operatorname{Im}[\phi(\tilde{\mathbf{t}}_2)] \\
	\vdots \\
	\operatorname{Im}[\phi(\tilde{\mathbf{t}}_{p^2+p})] \\
	\end{pmatrix}
	\end{align}
	Application of the multidimensional delta method to function $g: R^{2p(p+1)} \to R^{1+p/2+p^2/2}$, such that $g(\cdot)=\big( \hat\alpha^{mult}(\cdot),\hat{\boldsymbol\Sigma}_{d}(\cdot), \hat{\boldsymbol\Sigma}_{nd}(\cdot)  \big)$ yields 
	\begin{equation}
	\sqrt{n} [g(\hat {\boldsymbol{\theta}}_n) - g(\boldsymbol{\theta}_0)] \stackrel{d}{\to} \mathbb{N} \big(0, \mathbf{G}({\boldsymbol{\theta}}_0)\mathbf{\Omega} \mathbf{G}^T({\boldsymbol{\theta}}_0)\big),
	\end{equation}
	where $\mathbf{G}$ is the Jacobian of $g$. 
	In what follows we will need partial derivatives shown below:
	\begin{align}
	\begin{split}
	\frac{\partial |\phi(\mathbf{t}_i)|}{\partial \operatorname{Re}[\phi(\mathbf{t}_i)]} = \frac{\operatorname{Re}[\phi(\mathbf{t}_i)]}{|\phi(\mathbf{t}_i)|}; \quad \frac{\partial |\phi(\mathbf{t}_i)|}{\partial \operatorname{Im}[\phi(\mathbf{t}_i)]} = \frac{\operatorname{Im}[\phi(\mathbf{t}_i)]}{|\phi(\mathbf{t}_i)|}
	\end{split}
	\end{align}
	
	\paragraph{The Jacobian of $g$.}  
	\textit{$\alpha$-related component.}
	Since the first component of the function depends only on the first $2p$ entries of ${\boldsymbol{\theta}}$, columns of the first row of $\boldsymbol{G}$ with indexes $2p+1,\dots, 2p(p+1)$ are all zeros. Therefore, it is more convenient to work with $\mathbf{t}^e_j$ instead of the whole vector $\tilde{\mathbf{t}}$. We take $\hat \alpha_s (k)$ from the representation  (\ref{def:alpha_s}) to obtain the following derivatives:
	\begin{align}
	\begin{split}
	\frac{\partial \hat\alpha_s (k)}{\partial|\hat \phi_n(s_1\mathbf{e}_{k})|}  &= 
	\frac{1}{ \log \big|\frac{s_{1}}{s_{2}}\big|} \frac{\log|\hat \phi_n(s_2\mathbf{e}_{k})|}{\log|\hat \phi_n(s_1\mathbf{e}_{k})|} \frac{1}{\log|\hat \phi_n(s_2\mathbf{e}_{k})|} \frac{1}{|\hat \phi_n(s_1\mathbf{e}_{k})|}	 \\
	\frac{\partial \hat\alpha_s (k)}{\partial \rm{Re}[\hat \phi_n(s_1\mathbf{e}_{k})]} = 
	\frac{\partial \hat\alpha_s (k)}{\partial|\hat \phi_n(s_1\mathbf{e}_{k})|} \frac{\partial|\hat \phi_n(s_1\mathbf{e}_{k})|}{\partial \rm{Re}[\hat \phi_n(s_1\mathbf{e}_{k})]} &= 
	\frac{1}{\log \big|\frac{s_1}{s_2}\big|} \frac{1}{\log|\hat \phi_n(s_1\mathbf{e}_{k})|} \frac{\rm{Re}[\hat \phi_n(s_1\mathbf{e}_{k})]}{|\hat \phi_n(s_1\mathbf{e}_{k})|^2} \\
	\text{Analogously,} \quad \frac{\partial \hat\alpha_s (k)}{\partial \rm{Im}[\hat \phi_n(s_1\mathbf{e}_{k})]} &=\frac{1}{\log \big|\frac{s_1}{s_2}\big|} \frac{1}{\log|\hat \phi_n(s_1\mathbf{e}_{k})|} \frac{\rm{Im}[\hat \phi_n(s_1\mathbf{e}_{k})]}{|\hat \phi_n(s_1\mathbf{e}_{k})|^2}
	\end{split}
	\end{align}
	\begin{align}
	\begin{split}
	\frac{\partial \hat\alpha_s (k)}{\partial|\hat \phi_n(s_2\mathbf{e}_{k})|}  = 
	\frac{1}{ \log \big|\frac{s_1}{s_2}\big|} \cdot \frac{\log|\hat \phi_n(s_2\mathbf{e}_{k})|}{\log|\hat \phi_n(s_1\mathbf{e}_{k})|} \cdot \log|\hat \phi_n(s_1\mathbf{e}_{k})| \cdot(-1) \cdot \frac{1}{\{\log |\hat \phi_n(s_2\mathbf{e}_{k})|\}^2} \frac{1}{|\hat \phi_n(s_2\mathbf{e}_{k})|} = \\-\left[ \log \left|\frac{s_1}{s_2} \right| \cdot \log|\hat \phi_n(s_2\mathbf{e}_{k})| \cdot |\hat \phi_n(s_2\mathbf{e}_{k})| \right]^{-1}	 \\
	\frac{\partial \hat\alpha_s (k)}{\partial \rm{Re}[\hat \phi_n(s_2\mathbf{e}_{k})]} = 
	\frac{\partial \hat\alpha_s (k)}{\partial|\hat \phi_n(s_2\mathbf{e}_{k})|} \frac{\partial|\hat \phi_n(s_2\mathbf{e}_{k})|}{\partial \rm{Re}[\hat \phi_n(s_2\mathbf{e}_{k})]} = 
	-\left[ \log \left|\frac{s_1}{s_2} \right| \cdot \log|\hat \phi_n(s_2\mathbf{e}_{k})| \right]^{-1} \cdot \frac{\rm{Re}[\hat \phi_n(s_2\mathbf{e}_{k})]}{|\hat \phi_n(s_2\mathbf{e}_{k})|^2} \\
	\text{Analogously,} \quad \frac{\partial \hat\alpha_s (k)}{\partial \rm{Im}[\hat \phi_n(s_2\mathbf{e}_{k})]} =-\left[ \log \left|\frac{s_1}{s_2} \right| \cdot \log|\hat \phi_n(s_2\mathbf{e}_{k})| \right]^{-1} \cdot \frac{\rm{Im}[\hat \phi_n(s_2\mathbf{e}_{k})]}{|\hat \phi_n(s_2\mathbf{e}_{k})|^2}
	\end{split}
	\end{align}
	Next, we use the above formulas to compute the first row of Jacobian $\boldsymbol{G}$. From (\ref{def:alpha_m}),
	\begin{align}
	\begin{split}
	\frac{\partial \hat\alpha^{mult}}{\partial \rm{Re} \big(\hat \phi_n (t_1^e[k])\big)} = \frac{\partial \hat\alpha_s(k)}{\partial \rm{Re} \big(\hat \phi_n (t_1^e[k])\big)} = (\boldsymbol{A}_s^1 \odot \boldsymbol{R}_{\phi}^1)[k] \\		
	\frac{\partial \hat\alpha^{mult}}{\partial \rm{Im} \big(\hat \phi_n (t_1^e[k])\big)} = \frac{\partial \hat\alpha_s(k)}{\partial \rm{Im} \big(\hat \phi_n (t_1^e[k])\big)} = (\boldsymbol{A}_s^1 \odot \boldsymbol{Im}_{\phi}^1)[k] 
	\end{split}
	\end{align}    
	\begin{equation}
	\label{def:ARIm_1}
	\boldsymbol{A}_s^1 [k]:=\left[\log \left|\frac{s_1}{s_2}\right| \cdot \log|\phi(s_1\mathbf{e}_{k})| \cdot |\phi(s_1\mathbf{e}_{k})| \right]^{-1} \quad \boldsymbol{R}_{\phi}^1 [k]:=\frac{\rm{Re}[\phi(s_1\mathbf{e}_{k})]}{|\phi(s_1\mathbf{e}_{k})|} \quad \boldsymbol{Im}^1_{\phi}[k]:= \frac{\rm{Im}[\phi(s_1\mathbf{e}_{k})]}{|\phi(s_1\mathbf{e}_{k})|}
	\end{equation}
	\begin{equation}
	\begin{split}
	\frac{\partial \hat\alpha^{mult}}{\partial \rm{Re} \big(\hat \phi_n (t_2^e[k])\big)} = \frac{\partial \hat\alpha_s(k)}{\partial \rm{Re} \big(\hat \phi_n (t_2^e[k])\big)} = (\boldsymbol{A}_s^2 \odot \boldsymbol{R}_{\phi}^2)[k] \\		
	\frac{\partial \hat\alpha^{mult}}{\partial \rm{Im} \big(\hat \phi_n (t_2^e[k])\big)} = \frac{\partial \hat\alpha_s(k)}{\partial \rm{Im} \big(\hat \phi_n (t_2^e[k])\big)} = (\boldsymbol{A}_s^2 \odot \boldsymbol{Im}_{\phi}^2)[k] 
	\end{split}
	\end{equation}     
	\begin{equation}
	\boldsymbol{A}_s^2 [k]:=-\left[ \log \left|\frac{s_1}{s_2} \right| \cdot \log|\phi(s_2\mathbf{e}_{k})| \cdot |\phi(s_2\mathbf{e}_{k})| \right]^{-1} \quad \boldsymbol{R}_{\phi}^2 [k]:=\frac{\rm{Re}[\phi(s_2\mathbf{e}_{k})]}{|\phi(s_2\mathbf{e}_{k})|} \quad \boldsymbol{Im}^2_{\phi}[k]:=\frac{\rm{Im}[\phi(s_2\mathbf{e}_{k})]}{|\phi(s_2\mathbf{e}_{k})|}
	\end{equation}    
	\textit{$\boldsymbol\Sigma_{ii}$-related components.}    
	\begin{equation}\label{eqn:quad_vol_est}
	\hat{\boldsymbol\Sigma}_d [j] = [\hat{\boldsymbol\Sigma}]_{jj} = \frac{2(-\operatorname{log}|\hat \phi_n(s_1\mathbf{e}_j)|)^{2/\hat \alpha^{mult}}}{{s^2_1}} =    \frac{2\exp \left[\log\{-\operatorname{log}|\hat \phi_n(s_1\mathbf{e}_j)|\}\frac{2}{\hat \alpha^{mult}}\right]}{{s^2_1}}, \quad j \leq p.
	\end{equation} 
	\begin{equation}
	\begin{split}
	\text{The diagonal elements,} \quad
	\frac{\partial[\hat{\boldsymbol\Sigma}]_{jj}}{\partial |\hat \phi_n (s_1 \mathbf{e}_j)|} = 2 s^{-2}_1 \exp \left[\log\{-\operatorname{log}|\hat \phi_n(s_1\mathbf{e}_j)|\}\frac{2}{\hat \alpha^{mult}}\right] \times
	\\
	\left[ \frac{1}{-\log|\hat \phi_n(s_1\mathbf{e}_j)|} \cdot \frac{-1}{|\hat \phi_n(s_1\mathbf{e}_j)|} \cdot \frac{2}{\hat \alpha^{mult}} + \log\{ -\log|\hat \phi_n(s_1 \mathbf{e}_j)| \} \cdot \frac{-2}{(\hat \alpha^{mult})^2} \cdot \frac{1}{\log \left|\frac{s_1}{s_2} \right| \cdot \log|\hat\phi_n(s_1\mathbf{e}_j)| \cdot |\hat\phi_n(s_1\mathbf{e}_j)|} \right]
	\end{split}
	\end{equation}
	After plugging-in $\phi$'s in place of $\hat \phi_n$'s and using  (\ref{eqn:quad_vol_est}), we reduce the expression:
	\begin{equation}
	\begin{split}
	&\frac{\partial[\hat{\boldsymbol\Sigma}]_{jj}}{\partial |\hat \phi_n (s_1 \mathbf{e}_j)|} = \\
	&\frac{2}{\alpha} \boldsymbol\Sigma_{jj} \left[(\log| \phi(s_1\mathbf{e}_j)| \cdot | \phi(s_1\mathbf{e}_j)|)^{-1} + \frac{\alpha}{2} \log\left(\frac{s_1^2 \boldsymbol\Sigma_{jj}}{2}\right) \cdot \frac{-1}{\alpha} \cdot \left(\log\left|\frac{s_1}{s_2}\right| \cdot \log| \phi(s_1\mathbf{e}_j)| \cdot | \phi(s_1\mathbf{e}_j)|\right)^{-1} \right] = \\
	&\frac{2\boldsymbol\Sigma_{jj}}{\alpha} \big[\log| \phi(s_1\mathbf{e}_j)| \cdot | \phi(s_1\mathbf{e}_j)|\big]^{-1} \left(1 - \frac{\log\left(\frac{s_1^2 \boldsymbol\Sigma_{jj}}{2}\right)}{2\log\left|\frac{s_1}{s_2}\right|}\right)
	\end{split}
	\end{equation}
	\begin{equation}
	\begin{split}
	&j \neq k, \quad \frac{\partial[\hat{\boldsymbol\Sigma}]_{jj}}{\partial |\hat \phi_n (s_1 \mathbf{e}_k)|} = 2s^{-2}_1 \exp \left[\log\{-\operatorname{log}|\hat \phi_n(s_1\mathbf{e}_j)|\}\frac{2}{\hat \alpha^{mult}}\right] \times
	\\
	&\left[ \log\{ -\log|\hat \phi_n(s_1 \mathbf{e}_j)| \} \cdot \frac{-2}{[\hat \alpha^{mult}]^2} \cdot \frac{1}{\log\left|\frac{s_1}{s_2}\right| \cdot \log|\hat\phi_n(s_1\mathbf{e}_k)| \cdot |\hat\phi_n(s_1\mathbf{e}_k)|} \right]
	\end{split}
	\end{equation}
	which, after evaluating at point ${\boldsymbol{\theta}}_0$ becomes
	\begin{equation}
	\begin{split}
	\frac{\partial[\hat{\boldsymbol\Sigma}]_{jj}}{\partial |\hat \phi_n (s_1 \mathbf{e}_k)|} = \boldsymbol\Sigma_{jj} \cdot \frac{\alpha}{2} \cdot \log \left( \frac{s^2_1}{2} \boldsymbol\Sigma_{jj} \right) \cdot \frac{-2}{\alpha} \cdot \left( \log\left|\frac{s_1}{s_2}\right| \cdot \log|\phi(s_1\mathbf{e}_k)| \cdot |\phi(s_1\mathbf{e}_k)| \right)^{-1} =\\
	-\boldsymbol\Sigma_{jj} \cdot \log \left( \frac{s^2_1}{2} \boldsymbol\Sigma_{jj} \right) \cdot
	\left( \log\left|\frac{s_1}{s_2}\right| \cdot \log|\phi(s_1\mathbf{e}_k)| \cdot |\phi(s_1\mathbf{e}_k)| \right)^{-1}
	\end{split}
	\end{equation}
	\begin{equation}
	\begin{split}
	\forall j=1, \dots, p; k=1, \dots, p, \quad \frac{\partial[\hat{\boldsymbol\Sigma}]_{jj}}{\partial |\hat \phi_n (s_2 \mathbf{e}_k)|} = 2s^{-2}_1 \exp \left[\log\{-\operatorname{log}|\hat \phi_n(s_1\mathbf{e}_j)|\}\frac{2}{\hat \alpha^{mult}}\right] \cdot
	\\
	\log\{ -\log|\hat \phi_n(s_1 \mathbf{e}_j)| \} \cdot \frac{-2}{[\hat \alpha^{mult}]^2} \cdot \left[- \log \left|\frac{s_1}{s_2}\right| \cdot \log|\hat\phi_n(s_2\mathbf{e}_k)| \cdot |\hat\phi_n(s_2\mathbf{e}_k)| \right]^{-1}
	\end{split}
	\end{equation}
	, after evaluating at point ${\boldsymbol{\theta}}_0$ becomes
	\begin{equation}
	\begin{split}
	\frac{\partial[\hat{\boldsymbol\Sigma}]_{jj}}{\partial |\hat \phi_n (s_2 \mathbf{e}_k)|} = \boldsymbol\Sigma_{jj} \cdot \frac{\alpha}{2} \cdot \log \left( \frac{s^2_1}{2} \boldsymbol\Sigma_{jj} \right) \cdot \frac{-2}{\alpha^2} \cdot \left(- \log\left|\frac{s_1}{s_2}\right| \cdot \log|\phi(s_2\mathbf{e}_k)| \cdot |\phi(s_2\mathbf{e}_k)| \right)^{-1} =\\
	\boldsymbol\Sigma_{jj} \cdot \log \left( \frac{s^2_1}{2} \boldsymbol\Sigma_{jj} \right) \cdot
	\left( \alpha \cdot \log\left|\frac{s_1}{s_2}\right| \cdot \log|\phi(s_2\mathbf{e}_k)| \cdot |\phi(s_2\mathbf{e}_k)| \right)^{-1}
	\end{split}
	\end{equation}   
	\textit{$\boldsymbol\Sigma_{ij}$-related components.}     
	After rewriting the second formula in (\ref{eqn:matrix_estims}) in the form
	\begin{equation}%\label{eqn:quad_vol_est}
	\begin{split}
	[\hat{\boldsymbol\Sigma}]_{ij} = \frac{1}{2} \exp \left[\log\left(-\log \left\{|\hat \phi_n(\mathbf{e}_i+\mathbf{e}_j)|\right\}\right)\frac{2}{\hat \alpha^{mult}}\right] - \frac{1}{2} \exp \left[\log\left(-\log \left\{|\hat \phi_n(\mathbf{e}_i-\mathbf{e}_j)|\right\}\right)\frac{2}{\hat \alpha^{mult}}\right]
	\end{split}
	\end{equation}
	we write its derivative as 
	\begin{equation}%\label{eqn:quad_vol_est}
	\begin{split}
	&\frac{\partial [\hat{\boldsymbol\Sigma}]_{ij}}{\partial |\hat \phi_n(s_1\mathbf{e}_k)|} = 
	\\
	&\frac{1}{2} \exp \left[\log\left(-\log \left\{|\hat \phi_n(\mathbf{e}_i+\mathbf{e}_j)|\right\}\right)\frac{2}{\hat \alpha^{mult}}\right] \times \log\left(-\log \left\{|\hat \phi_n(\mathbf{e}_i+\mathbf{e}_j)|\right\}\right)\frac{2}{(\hat\alpha^{mult})^2} \times \frac{\partial \hat \alpha^{mult}}{\partial|\hat \phi_n(s_1\mathbf{e}_k)|} - 
	\\
	&\frac{1}{2} \exp \left[\log\left(-\log \left\{|\hat \phi_n(\mathbf{e}_i-\mathbf{e}_j)|\right\}\right)\frac{2}{\hat \alpha^{mult}}\right] \times \log\left(-\log \left\{|\hat \phi_n(\mathbf{e}_i-\mathbf{e}_j)|\right\}\right)\frac{2}{(\hat\alpha^{mult})^2} \times \frac{\partial \hat \alpha^{mult}}{\partial|\hat \phi_n(s_1\mathbf{e}_k)|}
	\\
	\end{split}
	\end{equation} 
	, after evaluating at point ${\boldsymbol{\theta}}_0$ becomes
	\begin{equation}%\label{eqn:quad_vol_est}
	\begin{split}
	&\frac{\partial [\hat{\boldsymbol\Sigma}]_{ij}}{\partial |\hat \phi_n(s_1\mathbf{e}_k)|} = 
	\Big[\frac{1}{2} \cdot \Big(\frac{1}{2}\boldsymbol\Sigma_{ii} + \boldsymbol\Sigma_{ij} + \frac{1}{2}\boldsymbol\Sigma_{jj}\Big) \cdot \frac{\alpha}{2} \log \Big(\frac{1}{2}\boldsymbol\Sigma_{ii} + \boldsymbol\Sigma_{ij} + \frac{1}{2}\boldsymbol\Sigma_{jj}\Big)- 
	\\
	&\frac{1}{2} \cdot \Big(\frac{1}{2}\boldsymbol\Sigma_{ii} - \boldsymbol\Sigma_{ij} + \frac{1}{2}\boldsymbol\Sigma_{jj}\Big) \cdot \frac{\alpha}{2} \log \Big(\frac{1}{2}\boldsymbol\Sigma_{ii} - \boldsymbol\Sigma_{ij} + \frac{1}{2}\boldsymbol\Sigma_{jj}\Big)\Big] \cdot
	\\
	&\frac{2}{\alpha^2} \cdot \left( \log\left|\frac{s_1}{s_2}\right| \cdot \log|\phi(s_1\mathbf{e}_k)| \cdot |\phi(s_1\mathbf{e}_k)| \right)^{-1}
	\end{split}
	\end{equation}
	After simplification, 
	\begin{equation}%\label{eqn:quad_vol_est}
	\begin{split}
	&\frac{\partial [\hat{\boldsymbol\Sigma}]_{ij}}{\partial |\hat \phi_n(s_1\mathbf{e}_k)|} = 
	\Big[ \Big(\frac{1}{2}\boldsymbol\Sigma_{ii} + \boldsymbol\Sigma_{ij} + \frac{1}{2}\boldsymbol\Sigma_{jj}\Big) \cdot  \log \Big(\frac{1}{2}\boldsymbol\Sigma_{ii} + \boldsymbol\Sigma_{ij} + \frac{1}{2}\boldsymbol\Sigma_{jj}\Big)- 
	\\
	&\Big(\frac{1}{2}\boldsymbol\Sigma_{ii} - \boldsymbol\Sigma_{ij} + \frac{1}{2}\boldsymbol\Sigma_{jj}\Big) \cdot  \log \Big(\frac{1}{2}\boldsymbol\Sigma_{ii} - \boldsymbol\Sigma_{ij} + \frac{1}{2}\boldsymbol\Sigma_{jj}\Big)\Big] \cdot
	\left(2 \alpha \cdot \log\left|\frac{s_1}{s_2}\right| \cdot \log|\phi(s_1\mathbf{e}_k)| \cdot |\phi(s_1\mathbf{e}_k)| \right)^{-1}
	\end{split}
	\end{equation}    
	Derivation of $\frac{\partial [\hat{\boldsymbol\Sigma}]_{ij}}{\partial |\hat \phi_n(s_2\mathbf{e}_k)|}$ is analogous:
	\begin{equation}%\label{eqn:quad_vol_est}
	\begin{split}
	&\frac{\partial [\hat{\boldsymbol\Sigma}]_{ij}}{\partial |\hat \phi_n(s_2\mathbf{e}_k)|} = 
	\Big[\frac{1}{2} \cdot \Big(\frac{1}{2}\boldsymbol\Sigma_{ii} + \boldsymbol\Sigma_{ij} + \frac{1}{2}\boldsymbol\Sigma_{jj}\Big) \cdot \frac{\alpha}{2} \log \Big(\frac{1}{2}\boldsymbol\Sigma_{ii} + \boldsymbol\Sigma_{ij} + \frac{1}{2}\boldsymbol\Sigma_{jj}\Big)- 
	\\
	&\frac{1}{2} \cdot \Big(\frac{1}{2}\boldsymbol\Sigma_{ii} - \boldsymbol\Sigma_{ij} + \frac{1}{2}\boldsymbol\Sigma_{jj}\Big) \cdot \frac{\alpha}{2} \log \Big(\frac{1}{2}\boldsymbol\Sigma_{ii} - \boldsymbol\Sigma_{ij} + \frac{1}{2}\boldsymbol\Sigma_{jj}\Big)\Big] \cdot
	\\
	&\frac{2}{\alpha^2} \cdot \left(- \log\left|\frac{s_1}{s_2}\right| \cdot \log|\phi(s_2\mathbf{e}_k)| \cdot |\phi(s_2\mathbf{e}_k)| \right)^{-1}
	\end{split}
	\end{equation}  
	\begin{equation}%\label{eqn:quad_vol_est}
	\begin{split}
	&\frac{\partial [\hat{\boldsymbol\Sigma}]_{ij}}{\partial |\hat \phi_n(s_2\mathbf{e}_k)|} = 
	\Big[ \Big(\frac{1}{2}\boldsymbol\Sigma_{ii} + \boldsymbol\Sigma_{ij} + \frac{1}{2}\boldsymbol\Sigma_{jj}\Big) \cdot  \log \Big(\frac{1}{2}\boldsymbol\Sigma_{ii} + \boldsymbol\Sigma_{ij} + \frac{1}{2}\boldsymbol\Sigma_{jj}\Big)- 
	\\
	&\Big(\frac{1}{2}\boldsymbol\Sigma_{ii} - \boldsymbol\Sigma_{ij} + \frac{1}{2}\boldsymbol\Sigma_{jj}\Big) \cdot  \log \Big(\frac{1}{2}\boldsymbol\Sigma_{ii} - \boldsymbol\Sigma_{ij} + \frac{1}{2}\boldsymbol\Sigma_{jj}\Big)\Big] \cdot
	\\
	&\left(-2\alpha \cdot \log\left|\frac{s_1}{s_2}\right| \cdot \log|\phi(s_2\mathbf{e}_k)| \cdot |\phi(s_2\mathbf{e}_k)| \right)^{-1}
	\end{split}
	\end{equation}  
	\begin{equation}%\label{eqn:quad_vol_est}
	\begin{split}
	&\frac{\partial [\hat{\boldsymbol\Sigma}]_{ij}}{\partial |\hat \phi_n(\mathbf{e}_i + \mathbf{e}_j)|} = 
	\\
	&\frac{1}{2} \exp \left[\log\left(-\log \left\{|\hat \phi_n(\mathbf{e}_i+\mathbf{e}_j)|\right\}\right)\frac{2}{\hat \alpha^{mult}}\right] \cdot \frac{2}{\hat \alpha^{mult}} \cdot \frac{1}{-\log \left\{|\hat \phi_n(\mathbf{e}_i+\mathbf{e}_j)|\right\}} \cdot \frac{-1}{|\hat \phi_n(\mathbf{e}_i+\mathbf{e}_j)|}  =
	\\
	&\left(-\log \left\{|\hat \phi_n(\mathbf{e}_i+\mathbf{e}_j)|\right\}\right)^{\frac{2}{\hat \alpha^{mult}}} \cdot \frac{1}{\hat \alpha^{mult}} \cdot \frac{1}{\log \left\{|\hat \phi_n(\mathbf{e}_i+\mathbf{e}_j)|\right\}} \cdot \frac{1}{|\hat \phi_n(\mathbf{e}_i+\mathbf{e}_j)|} 
	\\
	&\frac{\partial [\hat{\boldsymbol\Sigma}]_{ij}}{\partial |\hat \phi_n(\mathbf{e}_i - \mathbf{e}_j)|} = 
	-\exp \left[\log\left(-\log \left\{|\hat \phi_n(\mathbf{e}_i-\mathbf{e}_j)|\right\}\right)\frac{2}{\hat \alpha^{mult}}\right] \cdot \frac{1}{\hat \alpha^{mult}} \cdot \frac{1}{-\log \left\{|\hat \phi_n(\mathbf{e}_i-\mathbf{e}_j)|\right\}} \cdot \frac{-1}{|\hat \phi_n(\mathbf{e}_i-\mathbf{e}_j)|} =
	\\
	&-\left(-\log \left\{|\hat \phi_n(\mathbf{e}_i-\mathbf{e}_j)|\right\}\right)^{\frac{2}{\hat \alpha^{mult}}} \cdot \frac{1}{\hat \alpha^{mult}} \cdot \frac{1}{\log \left\{|\hat \phi_n(\mathbf{e}_i-\mathbf{e}_j)|\right\}} \cdot \frac{1}{|\hat \phi_n(\mathbf{e}_i-\mathbf{e}_j)|}      
	\end{split}
	\end{equation}
	\begin{equation}
	\frac{\partial [\hat{\boldsymbol\Sigma}]_{ij}}{\partial | \phi_n(\mathbf{e}_i + \mathbf{e}_j)|} =
	\frac{- \left(\frac{1}{2} \boldsymbol\Sigma_{ii} + \boldsymbol\Sigma_{ij} + \frac{1}{2} \boldsymbol\Sigma_{jj}\right)^{(1-\frac{\alpha}{2})}}{\alpha \cdot |\phi(\mathbf{e}_i+\mathbf{e}_j)|};  
	\qquad
	\frac{\partial [\hat{\boldsymbol\Sigma}]_{ij}}{\partial |\hat \phi_n(\mathbf{e}_i - \mathbf{e}_j)|} =
	\frac{\left(\frac{1}{2} \boldsymbol\Sigma_{ii} - \boldsymbol\Sigma_{ij} + \frac{1}{2} \boldsymbol\Sigma_{jj}\right)^{(1-\frac{\alpha}{2})}}{\alpha \cdot |\phi(\mathbf{e}_i-\mathbf{e}_j)|}          
	\end{equation}
	
	\begin{equation}
	\begin{split}
	\frac{\partial \hat{\boldsymbol\Sigma}_{nd}[\#]}{\partial \rm{Re}\left(\hat \phi_n(\mathbf{e}_i+\mathbf{e}_j)\right)} = \frac{\partial \hat{\boldsymbol\Sigma}_{ij}}{\partial |\hat \phi_n(\mathbf{e}_i+\mathbf{e}_j)|} \frac{\partial |\hat \phi_n(\mathbf{e}_i+\mathbf{e}_j)|}{\partial \rm{Re}\left(\hat \phi_n(\mathbf{e}_i+\mathbf{e}_j)\right)} =
	\frac{\partial \hat{\boldsymbol\Sigma}_{ij}}{\partial |\hat \phi_n(\mathbf{e}_i+\mathbf{e}_j)|} \frac{ \rm{Re}\left(\hat \phi_n(\mathbf{e}_i+\mathbf{e}_j)\right)}{ |\hat \phi_n(\mathbf{e}_i+\mathbf{e}_j)| }        
	\\    	 
	%\left[\frac{\partial \hat{\boldsymbol\Sigma}_{nd}[\#]}{\partial \rm{Re}\left(\hat \phi_n(\mathbf{r}^+[,\#'] )\right)} \right][\#,\#'] =	M^+[\#,\#'] \qquad diag(M^+)=(D^+ \odot R_{\phi}^+)
	\end{split}
	\end{equation}    
	%$M^+$ is a diagonal matrix.
	
\end{proof}

%------------------------------------------------------------------------------------------------

%------------------------------------------------------------------------------------------------

\bibliographystyle{unsrt}
\bibliography{biblio}

\end{document}